\newcommand{\R}{\mathbb R}
\newcommand{\N}{\mathbb N}
\newenvironment{proof}{\noindent {\it Proof.}} {\hspace*{\fill}$\Box$\\}
\newtheorem{proposition}{Proposition}[section]
\newtheorem{theorem}[proposition]{Theorem}
\newtheorem{corollary}[proposition]{Corollary}
\newtheorem{remark}[proposition]{Remark}
\newtheorem{example}[proposition]{Example}
\begin{document}
\noindent

\title{An Optimization Approach to 
Parameter Identification in Variational Inequalities of Second Kind - II}
 
\author{Joachim Gwinner}

\date {}

\maketitle

\abstract{This paper continues the work of \cite{Gwi18}
and is concerned with the 
inverse problem of parameter identification in variational inequalities of the second kind that does not only treat the parameter linked to a bilinear form, but importantly also the parameter  linked to a nonlinear non-smooth function. Here we specify  the abstract framework of \cite{Gwi18} and cover frictional contact problems as well as other non-smooth problems from continuum mechanics.
The optimization approach  of \cite{Gwi18} to the inverse problem using the output-least squares formulation involves the variational inequality of the second kind as constraint. Here we use regularization technics of 
nondifferentiable optimization from  \cite{FacPan}, regularize  
the nonsmooth part in the variational inequality
and arrive at an optimization problem for which the constraint variational inequality is replaced by the regularized variational equation. For
this case, the smoothness of the parameter-to-solution map is studied and convergence analysis and optimality conditions are given.
}

\begin{quote}
 2010 Mathematics Subject Classification: 
49J40, 49N45, 90C26
 \\
Keywords and phrases: Variable parameter identification, ellipticity parameter, friction parameter, trilinear form, semisublinear form, regularization, adjoint analysis, optimality condition
\end{quote}

\section{Introduction} \label{sec:1}

This paper continues the work of \cite{Gwi18}
and is concerned with the 
inverse problem of parameter identification in variational inequalities of the second kind following the terminology of \cite{Glowinski1}.
A prominent example of this class is the following {\it direct} problem:
Find the function $u$ in the standard Sobolev space 
$H^1(\Omega) = \{v \in L^2(\Omega): \nabla v  \in  (L^2(\Omega))^d \} $ on a
bounded Lipschitz domain $\Omega \subset \mathbb{R}^d \, (d=2,3)$ such that 
for any $v \in H^1(\Omega)$ there holds  

\begin{eqnarray} \nonumber
&& \int_{\Omega} e(x) ~ [\nabla u \cdot \nabla (v-u) + u (v-u)] ~dx
+ \int_{ \partial \Omega} f(s) ~ (|v|-|u|) ~ds \\[1ex]
 &\geq & \int_{\Omega} g(x) ~ (v-u) ~dx \,. \label{VI-1}
\end{eqnarray}

This variational inequality (VI) is related to the Helmholtz partial differential equation
$ - \Delta u + u = g$ rendering the coercive bilinear form
$\nabla u \cdot \nabla v + u~v $. (\ref{VI-1}) provides a simplified
scalar model of the Tresca frictional contact problem 
in linear elasticity, as detailed later in the text.
By the classic theory of variational inequalities \cite{KiSt}
there is a unique solution $u$ of (\ref{VI-1}) if 
the datum $g$ that enters the right-hand side is given in $L^2(\Omega)$
and moreover, the ``ellipticity'' parameter $e > 0$ in $L^\infty(\Omega)$ and 
the ``friction'' parameter $f> 0$ in $L^\infty(\partial \Omega)$ are known.
Here we study the {\it inverse} problem that asks for the distributed parameters $e$ and $f$,
when the state $u$ or, what is more realistic, some approximation
$\tilde u$ from measurement is known.

We loosely follow the optimization approach of \cite{Gwi18} to this inverse problem.
This approach uses an output-least squares formulation that involves the variational inequality of the second kind as constraint.
As already shown in \cite{Gwi18}, the parameter-to-solution map is Lipschitz,
however not differentiable in general. 

The main objective of this contribution is the derivation of optimality
conditions of first order. To this purpose  we adopt a regularization approach that is similar to the regularization approach to optimal control of elliptic variational inequalities that goes back to the work of V. Barbu \cite{Bar81,Bar84}.  
 Here we apply well-known technics in the study of numerical methods for the solution of finite-dimensional variational inequalities, see \cite{FacPan},
and use smoothing functions to the plus function and to the related modulus function. Thus we can regularize  
the nonsmooth part in the variational inequality
and obtain an optimization problem for which the constraint variational inequality is replaced by the regularized variational equation. For
this case, the smoothness of the parameter-to-solution map is studied and convergence analysis and optimality conditions are given.

For a review of literature on inverse problems in VI up to 2017 we refer 
to \cite{Gwi18}.
More recent works are  \cite{KhaMig19,MigKha19}.

So in contrast to the work cited above, this contribution  on the inverse problem of parameter identification in variational inequalities  does not only treat the parameter $e$ linked to a bilinear form, but importantly also the parameter  $f$ linked to a nonlinear nonsmooth function, like the modulus function above.

This paper is organized as follows. The next section 2 collects some more variational inequalities of the second kind  which model frictional contact and are drawn from other non-smooth problems in continuum mechanics. Here we also give  an abstract framework for parameter identification of the ellipticity parameter linked to the bilinear form and of an friction parameter linked to a non-smooth function. 
Then in section 3 we present an regularization procedure smoothing the modulus function. By this approach we arrive at  a regularized VI equivalent to a variational equation for which we can prove  the differentiability of the 
parameter-to-solution map. Moreover we provide an estimate
of the error between the solution of the original VI  and the solution 
of the regularized VI. Section 4 develops the least squares optimization approach to the parameter identification problems where the original VI, respectively the regularized VI appear as constraint. Beyond existence results for these optimization problems we  establish an approximation result for the 
optimal solutions associated to the regularized VI to an optimal solution associated to the original VI when the regularization parameter goes to zero.
Finally we present optimality conditions of first order for the regularized optimization problem with the regularized VI as constraint.
 The paper ends with a  short outlook to related parameter identification problems and their solution.

\section{ Some variational inequalities and an
 abstract framework for parameter identification}\label{sec:2}

Let $\Omega$ be a bounded domain $ \subset \mathbb{R}^ d \,(d=2,3)$
 with Lipschitz boundary $\Gamma$
and nonempty boundary part $ \Gamma_C \subset \subset \Gamma$.
 Further, let $ 0 < e \in L^{\infty} (\Omega),~ 0 < f \in L^{\infty} (\Gamma_C), ~g \in L^2 (\Omega),$
 $ V=\{ v \in H^1 (\Omega): v| \Gamma \backslash \Gamma_C ~ = ~0 \}. $
Then one may consider the VI: 
Find $ u \in V $~such that for all $ v \in V$,
\begin{equation} \label{VI-2} 
 \int\limits_{\Omega} e~ \nabla u \cdot \nabla (v-u)
 + \int\limits_{\Gamma_C} f (|v|-|u|) 
\geq \int\limits_{\Omega} g(v-u)  \,.
\end{equation}  
This scalar VI is more related than (\ref{VI-1}) to the Tresca frictional contact problem
which reads as follows:
Find  $ u \in V:=\{ v \in  H^1 (\Omega,~\mathbb{R}^d): 
v| \Gamma \backslash \Gamma_C ~ = ~0 \}$
($d = 2, 3 $)
 ~ such that for all $v \in V$,
 \begin{equation} \label{VI-3} 
 \int\limits_{\Omega} [E~\sigma(u)~:~\sigma(v-u)~
+~\int\limits_{\Gamma_C} f~ (|v \cdot n|-|u \cdot n|)
 \geq\int\limits_{\Omega} g \cdot (v-u)  \,, 
\end{equation}  
where now $ E \in L^{\infty} (\Omega,~\mathbb{R}^{d \times d}_{\mbox{\tiny symm}})~,
~~E>0$ (that is, $E$ is positive  definite) is the anisotropic elasticity tensor,
 $\sigma=\sigma(u), \sigma = 1/2 ~ ( \nabla u + (\nabla u)^T) $ denotes the strain field  associated to the displacement field $u$, $n$ stands for the outward unit normal, and now $g \in L^2 (\Omega,~\mathbb{R}^d ).$

A  vectorial VI of second kind similar to (\ref{VI-3}) appears in Stokes flow
with leaky boundary condition or with Tresca friction boundary condition; see 
\cite{Fujita94,BaffSass15,ABGS16}.

When replacing the functional $\int\limits_{\Gamma_C}  f~ |w| $
by $\int\limits_{\Omega}  f~ |\nabla w| $ in (\ref{VI-1})
or in (\ref{VI-2}) one obtains a VI of second kind that models
laminar flow of a Bingham fluid, see e.g. \cite{Glowinski1}.  More general than Bingham fluid
is the vectorial viscoplastic fluid flow problem studied in \cite{GloWac11}.

All these VIs can be covered by the abstract framework of
\cite{Gwi18} as follows.
Let (as above) $V$ be a real Hilbert space, where in virtue of the Riesz isomorphism we identify the dual space $V^*$ with $V$. 
Moreover let $E,F$ real Banach spaces
with convex closed cones  $E_{+} \subset E$ and $ F_{+} \subset F$.
Let as with \cite{GocKha07}, $t:E \times V \times V \rightarrow \mathbb{R},
(e,u,v) \mapsto t(e,u,v)$ a trilinear form and $l:V \rightarrow \mathbb{R},
v \mapsto l(v)$ a linear form.  Assume that $t$ is continuous
such that $t(e, \cdot , \cdot) $ is  $V-$elliptic for any fixed 
$e \in \mbox{ int } E_{+} $.
Now in addition we have a  "semisublinear form"
 $s:F \times V  \rightarrow \mathbb{R},
(f,u) \mapsto s(f,u)$, that is, for any $u \in V$,
$s(f,u)$ is linear in its first argument $f$ on F
 and for any  $f  \in F_{+}$, $s(f, \cdot) $ is  sublinear, 
continuous, and nonnegative on $V$.
Moreover assume that $s(f, 0_V) = 0 $ for any $f \in F$.

Then the forward problem is the following VI: 
Given $e \in \mbox{ int } E_{+} $ and
$f \in F_{+} $,
find  $u \in V$  such that 
\begin{equation} \label{VI} 
t(e; u,v-u)~+~ s(f;v)~-~s(f;u) ~\geq~ l(v-u)  \,,\, \forall v \in V \,.
\end{equation}  
 
Now with given convex closed subsets
 $E^{\mbox{\tiny ad}} \subset \mbox{ int } E_{+}$ and
$F^{\mbox{\tiny ad}} \subset F_{+}$
we seek to identify two parameters, 
namely the "ellipticity" parameter
$e$ in $E^{\mbox{\tiny ad}}$ and
 the "friction" parameter
$f$ in $F^{\mbox{\tiny ad}}$.

In the model problem we have some convex closed cone
$ E_{+}  \subseteq  \{ e \in L^{\infty} (\Omega):
 ~e \geq 0~~\mbox{a.e. on }  \Omega \} $
containing the convex closed "feasible" set 
$E^{\mbox{\tiny ad}} = \{ e \in E_{+} ~:~~\underline{e} \leq e \leq \overline{e}
 ~\mbox{ a.e.~on } \Omega \} $, where the bounds
 $ \underline{e} < \overline{e}$ are given in 
$\mathbb{R}_{++} = \{r \in \mathbb{R}: r > 0 \} $.
Likewise we have some convex closed cone
$ F_{+} \subseteq \{f \in L^{\infty} (\Gamma_C) ~:~~ f \geq 0 ~
\mbox {a.e. on } \Gamma_C \} $
containing the convex closed "feasible" set 
$F^{\mbox{\tiny ad}} = \{ f \in F_{+}:~\underline{f} \leq f \leq \overline{f} 
~~\mbox{a.e.~on } \Gamma_C \} $, where the bounds
$ 0 \leq \underline{f} < \overline{f}$ are given.

As with M.~S.~Gockenbach and A.~A.~Khan in \cite{GocKha07} we can assume
that $t(e;\cdot,\cdot)$ is symmetric and
\begin{eqnarray}
&& t(e; u,v) \leq \overline{t}~
 ||e||_E ~ ||u||_V ~ ||v||_V , \, \forall  e \in E,
~u \in V,~v \in V   \label{tri-est-1}
 \\[1.0ex] \label{tri-est-2}
&& t(e; u,u) \geq \underline{t}~ ||u||_V^ 2, \,
\forall e \in E^{\mbox{\tiny ad}}  \subset E,~u \in V \,. 
\end{eqnarray}
In fact, $\overline{t} < \infty$ directly follows from the 
assumed continuity of $t$, where in the model problem $\overline{t}=1$,
and $\underline{t} >0 $ comes from Poincar\'{e} inequality; 
respectively in the elastic friction contact problem 
from Korn's inequality.

In the following we specify the abstract approach of \cite{Gwi18}
and give the sublinear functional $s$ a more concrete form.
Let $D$ be some finite dimensional bounded open domain.
We fix $F :=L^{\infty}(D)$ and $F_+ :=L_{+}^{\infty} (D)$. 
Introduce the $L^2$ scalar product $(\cdot,\cdot)$ on $D$
and let $\gamma: V \to L^2(D)$ a
linear continuous map. Then use the modulus function $|\cdot|$ 
and define
\begin{equation} \label{sub-mod} 
s(f;v):= (f,|\gamma(v)|) = \int\limits_D f~ |\gamma(v)| ~ ds , ~\, 
\forall f \in L^{\infty}(D), v \in V \,.
\end{equation}  

In the model problem we have the trace map 
$\gamma: V = H^1(\Omega) \to L^2(D)$ with $D = \partial \Omega $,
whereas in the elastic friction contact problem we have
the trace map 
$\gamma: V=\{ v \in  H^1 (\Omega,~\mathbb{R}^d): 
v| \Gamma \backslash \Gamma_C ~ = ~0 \} \to L^2(D)$ with 
$D = \Gamma_c $ defined by $v \to v \cdot n$.
In the Bingham flow problem or (simplified) plasticity problem,
simply $D = \Omega$, $\gamma: v\in V \mapsto \nabla v \in L^2(\Omega)$, and the modulus function changes to the modulus of a vector in an obvious way. 

\section{ The regularization procedure}\label{sec:3}

 In this section we show how the VI (\ref{VI}) of the second kind that appears as constraint in the parameter identification problem regularizes in a variational equation. This regularization procedure is based on an appropriate smoothing of the modulus function.

\subsection{Smoothing the modulus function}

The modulus function $m, m(t)= |t|$ is related to the plus function 
$p, p(t)= t_+ = \max(t,0)$, ($t \in \R$) by
\begin{eqnarray*}
m(t) = p(t) + p(-t), \\
p(t) = 1/2 (t + m(t)) \,.
\end{eqnarray*}
Therefore smoothing functions for the plus function easily translate to those
for the modulus function. 

The general methods for constructing smoothing functions go back to the works of Sobolev, see \cite{AdaFou}, and are based on convolution. 
Let $\rho : \R \to \R_+ := \{ s \in \R: s \ge 0 \}$ be the density of a probability distribution on $\R$, that is, 
\begin{itemize}
\item $\rho$ is Lebesgue integrable
\item $ \int_\R \rho(s) ds  = 1$ \,.
\end{itemize}

Let $\R_{++} := \{ \varepsilon \in \R: \varepsilon > 0 \}$.
Then define the smoothing function  $ P : \R_{++} \times \R\to \R$   via convolution for the  plus function $p$  by
\begin{equation} \label{regP}
P(\varepsilon,t)= \int_\R p(t- \varepsilon s) \rho \, ds =
\int_{-\infty}^{\frac{t}{\varepsilon}} 
(t-\varepsilon s)\rho(s) \, ds. 
\end{equation}

Further, we focus $\rho : \R \to \R _{+}$ to be a density function of finite absolute mean, that is,  
\begin{equation} \label{fin-abs-mean}
  k:=\int_{\R}|s|\,\rho(s) \, ds < \infty.
\end{equation}

Referring to \cite[Proposition 11.8.10]{FacPan} we have the following approximation result.

\begin{proposition} \label{p-reg-prop}
Let  the density function $\rho : \R \to \R _{+}$ satisfy (\ref{fin-abs-mean}).
Then for any $\varepsilon > 0, \, t \in \R$ 
\begin{equation} \label{p-reg-est}
  | P(\varepsilon,t) - p(t) | \le k ~ \varepsilon \,.
\end{equation}
Further for any $\varepsilon > 0$, $ P(\varepsilon,\cdot)$ is convex
and twice continuously differentiable on $\R$ with
$$
P_t(\varepsilon,t) = \int_{-\infty}^{\frac{t}{\varepsilon}} \rho(s) \,  ds, \,
 P_{tt}(\varepsilon,t) =
 \frac{1}{\varepsilon}\, \rho(\frac{t}{\varepsilon})  ,
$$
and $0 \le P_t(\varepsilon,t) \le 1$ for all $t \in \R$.
\end{proposition}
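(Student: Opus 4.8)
The plan is to establish the three assertions—the estimate (\ref{p-reg-est}), the convexity together with the $C^2$-regularity and the derivative formulas, and the bounds on $P_t$—from three elementary facts: the $1$-Lipschitz continuity of the plus function $p$, differentiation of the convolution integral with respect to $t$, and the normalization $\int_\R \rho = 1$ together with $\rho \ge 0$.

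For the estimate (\ref{p-reg-est}) I would exploit that $\rho$ integrates to $1$, so that $p(t) = \int_\R p(t)\,\rho(s)\,ds$ and hence
\begin{equation*}
P(\varepsilon,t) - p(t) = \int_\R \bigl[\, p(t-\varepsilon s) - p(t) \,\bigr]\,\rho(s)\,ds \,.
\end{equation*}
Since $p(\tau)=\max(\tau,0)$ is $1$-Lipschitz, $|p(t-\varepsilon s)-p(t)| \le \varepsilon |s|$, and integrating this bound against $\rho$ gives $|P(\varepsilon,t)-p(t)| \le \varepsilon \int_\R |s|\,\rho(s)\,ds = k\,\varepsilon$, using (\ref{fin-abs-mean}). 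This is the only place where the finite-absolute-mean hypothesis enters.

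For the derivative formulas I would differentiate under the integral sign. Writing $p'=H$ for the Heaviside function (defined off the single point $s=t/\varepsilon$, which is $\rho$-negligible), the difference quotients of $p(t-\varepsilon s)$ in $t$ are bounded by $1$, so $|\partial_t[\,p(t-\varepsilon s)\rho(s)\,]| \le \rho(s)$, and dominated convergence justifies
\begin{equation*}
P_t(\varepsilon,t) = \int_\R H(t-\varepsilon s)\,\rho(s)\,ds = \int_{-\infty}^{t/\varepsilon} \rho(s)\,ds \,.
\end{equation*}
Equivalently, differentiating the variable-limit representation in (\ref{regP}) by the Leibniz rule produces a boundary contribution $(t-\varepsilon\cdot\tfrac{t}{\varepsilon})\,\rho(\tfrac{t}{\varepsilon})\,\tfrac{1}{\varepsilon}$ that vanishes identically, because the integrand $t-\varepsilon s$ is zero at $s=t/\varepsilon$; this is precisely why the smoothing is differentiable despite the kink of $p$. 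A second differentiation via the fundamental theorem of calculus then yields $P_{tt}(\varepsilon,t) = \tfrac{1}{\varepsilon}\,\rho(\tfrac{t}{\varepsilon})$.

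Convexity follows at once, either from $P_{tt} = \tfrac{1}{\varepsilon}\rho(\tfrac{t}{\varepsilon}) \ge 0$, or more directly by noting that $P(\varepsilon,\cdot)$ is an average of the convex maps $t\mapsto p(t-\varepsilon s)$ against the probability measure $\rho\,ds$. The bounds $0\le P_t(\varepsilon,t)\le 1$ are immediate from the representation of $P_t$ as the cumulative distribution function of $\rho$: nonnegativity because $\rho\ge 0$, and the upper bound because $\int_{-\infty}^{t/\varepsilon}\rho \le \int_\R \rho = 1$. The main point requiring care is the differentiation step: the kink of $p$ at the origin must be handled (it contributes only on a null set), and the claimed \emph{continuous} twice-differentiability presupposes that the density $\rho$ is continuous, so that $t\mapsto \rho(t/\varepsilon)$ is continuous; without that regularity one obtains $P_{tt}$ only almost everywhere.
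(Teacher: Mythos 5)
Your proof is correct and complete. Note that the paper itself gives no argument for this proposition: it simply cites Facchinei--Pang, Proposition 11.8.10, so you are supplying the proof rather than reproducing one, and your route (Lipschitz bound integrated against $\rho$ for the error estimate, dominated convergence for $P_t$, the CDF representation for the bounds and convexity) is the standard one that the cited reference follows. Your closing caveat is a genuinely valuable observation and not a defect of your argument: under the stated hypotheses ($\rho$ merely integrable with finite absolute mean) one gets $P(\varepsilon,\cdot)\in C^1$ with $P_t$ equal to the continuous CDF of $\rho$ rescaled, but $P_{tt}=\tfrac{1}{\varepsilon}\rho(\tfrac{t}{\varepsilon})$ only almost everywhere unless $\rho$ is continuous. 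Indeed the paper's own Examples with the discontinuous densities $\rho_3$ and $\rho_4$ produce piecewise-defined $P_3$, $P_4$ whose second derivatives jump, so the ``twice continuously differentiable'' clause of the proposition as stated is too strong without an added continuity assumption on $\rho$; your proof correctly isolates exactly where that assumption is needed.
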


By the above relation between the plus function and the modulus function, 
we  define  the smoothing function
 $ M : \R_{++} \times \R\to \R$  for the  
modulus function $m$  by
\begin{equation} \label{regM}
M(\varepsilon,t) = P(\varepsilon,t) + P(\varepsilon,-t) 
\end{equation}
and obtain from Proposition \ref{p-reg-prop} the direct consequence:

\begin{corollary} \label{p-reg-cor}
Let  the density function $\rho : \R \to \R _{+}$ satisfy (\ref{fin-abs-mean}).
Then for any $\varepsilon > 0, \, t \in \R$ 
\begin{equation} \label{m-reg-est}
  | M(\varepsilon,t) - m(t) | \le 2 k  \varepsilon \,.
\end{equation}
Further for any $\varepsilon > 0$, $ M(\varepsilon,\cdot)$ is convex
and twice continuously differentiable on $\R$ with
\begin{equation} \label{m-C-2}
M_t(\varepsilon,t) 
= \int_{-\frac{t}{\varepsilon}}^{\frac{t}{\varepsilon}} \rho(s) ds , \,
 M_{tt}(\varepsilon,t) =
 \frac{1}{\varepsilon} [ \rho(\frac{t}{\varepsilon}) + \rho(\frac{-t}{\varepsilon}] \,,
\end{equation}
and $0 \le M_t(\varepsilon,t) \le 2$ for all $t \in \R$.
\end{corollary}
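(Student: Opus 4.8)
The plan is to read off every assertion directly from Proposition~\ref{p-reg-prop}, exploiting the defining relation (\ref{regM}), $M(\varepsilon,t)=P(\varepsilon,t)+P(\varepsilon,-t)$, which mirrors the splitting $m(t)=p(t)+p(-t)$. In this way each property of $M$ decouples into the corresponding property of $P$ evaluated at $t$ and at $-t$, the only bookkeeping being the chain rule for the inner map $t\mapsto -t$.

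For the approximation estimate (\ref{m-reg-est}) I would write
\begin{equation*}
M(\varepsilon,t)-m(t)=\bigl[P(\varepsilon,t)-p(t)\bigr]+\bigl[P(\varepsilon,-t)-p(-t)\bigr],
\end{equation*}
apply the triangle inequality and bound each bracket by $k\varepsilon$ using (\ref{p-reg-est}), the second bracket with $-t$ in place of $t$. This gives $|M(\varepsilon,t)-m(t)|\le k\varepsilon+k\varepsilon=2k\varepsilon$, which is precisely the factor $2$ appearing in (\ref{m-reg-est}).

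Since $P(\varepsilon,\cdot)$ is twice continuously differentiable and $t\mapsto -t$ is smooth, $M(\varepsilon,\cdot)\in C^2(\R)$, and I would differentiate (\ref{regM}) twice. The first derivative is $M_t(\varepsilon,t)=P_t(\varepsilon,t)-P_t(\varepsilon,-t)$, the minus sign being the inner derivative of $t\mapsto -t$; inserting $P_t(\varepsilon,t)=\int_{-\infty}^{t/\varepsilon}\rho(s)\,ds$ yields
\begin{equation*}
M_t(\varepsilon,t)=\int_{-\infty}^{t/\varepsilon}\rho(s)\,ds-\int_{-\infty}^{-t/\varepsilon}\rho(s)\,ds=\int_{-t/\varepsilon}^{t/\varepsilon}\rho(s)\,ds,
\end{equation*}
the formula in (\ref{m-C-2}). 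Differentiating once more, the inner $-1$ now enters quadratically, so $M_{tt}(\varepsilon,t)=P_{tt}(\varepsilon,t)+P_{tt}(\varepsilon,-t)=\tfrac{1}{\varepsilon}\bigl[\rho(t/\varepsilon)+\rho(-t/\varepsilon)\bigr]$, again as in (\ref{m-C-2}). Convexity of $M(\varepsilon,\cdot)$ then follows at once, either from $M_{tt}\ge 0$ (as $\rho\ge 0$ and $\varepsilon>0$) or, with no computation, from the fact that a sum of convex functions precomposed with affine maps is convex.

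There is no genuine obstacle here; the only point deserving care is the bound on $M_t$. The triangle inequality applied to $M_t(\varepsilon,t)=P_t(\varepsilon,t)-P_t(\varepsilon,-t)$ together with $0\le P_t\le 1$ gives $|M_t(\varepsilon,t)|\le 2$, which already secures the upper bound $M_t\le 2$. The integral representation sharpens this to $|M_t(\varepsilon,t)|=\bigl|\int_{-t/\varepsilon}^{t/\varepsilon}\rho(s)\,ds\bigr|\le\int_\R\rho(s)\,ds=1$; moreover $M_t$ is odd in $t$, so $M_t(\varepsilon,t)\ge 0$ exactly for $t\ge 0$, consistent with $M_t$ approximating $\operatorname{sign}(t)=m'(t)$ as $\varepsilon\to 0$.
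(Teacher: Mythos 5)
Your argument is correct and coincides with what the paper intends: Corollary \ref{p-reg-cor} is presented as a direct consequence of Proposition \ref{p-reg-prop} through the splitting $M(\varepsilon,t)=P(\varepsilon,t)+P(\varepsilon,-t)$, and your triangle inequality for (\ref{m-reg-est}) together with the chain-rule computation of $M_t$ and $M_{tt}$ supplies precisely the bookkeeping the paper omits. Your closing remark is moreover a legitimate correction to the statement itself: since $M(\varepsilon,\cdot)$ is even and convex, $M_t(\varepsilon,\cdot)$ is odd, so the claimed lower bound $0\le M_t(\varepsilon,t)$ cannot hold for all $t\in\R$ (for $t<0$ one has $M_t(\varepsilon,t)=-\int_{t/\varepsilon}^{-t/\varepsilon}\rho(s)\,ds\le 0$); the bound that actually holds, and that you derive from the integral representation, is $|M_t(\varepsilon,t)|\le\int_\R\rho(s)\,ds=1$, consistent with $M_t$ approximating $\mathrm{sign}(t)=m'(t)$ as $\varepsilon\to 0$.
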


Since the smoothing function for the modulus function is  based 
on the smoothing function for the plus function, 
some examples from \cite{FacPan} and the references therein are in order.
\begin{example}
$$
P_1(\varepsilon,t)= \int _{-\infty}^{\frac{t}{\varepsilon}}(t-\varepsilon s)
\,\rho_1(s) \, ds= t + \varepsilon \, ln (1+ e^{- \frac{t}{\varepsilon}})=
\varepsilon \, ln (1+ e^{ \frac{t}{\varepsilon}})\, , 
$$
where $\rho_1(s)=\frac{e^{-s}}{(1+e^{-s})^2} $. 
\end{example}
\begin{example}
$$
 P_2(\varepsilon,t)=  \int _{-\infty}^{\frac{t}{\varepsilon}}(t-\varepsilon s)
\,\rho_2(s) \, ds =\frac{\sqrt{t^2+4\varepsilon^2}+t}{2}\, ,
$$
where $  \rho_2(s)=\frac {2}{(s^2+4)^{3/2}}$.
\end{example}
\begin{example}
$$
P_3(\varepsilon, t) = \int _{-\infty}^{\frac{t}{\varepsilon}}(t-\varepsilon s)
\,\rho_3(s) \, ds =\left \{ \begin{array} {ll} 0, & \quad \mbox{if} \; \; t < -\frac
    {\varepsilon}{2},\\[0.1cm]
\frac{1}{2\varepsilon}(t+ \frac{\varepsilon}{2})^2, & \quad \mbox{if} \;\;  -
\frac{\varepsilon}{2} \leq t \leq \frac{\varepsilon}{2}\, , \\[0.1cm]
t,  & \quad \mbox{if} \;\; t > \frac    {\varepsilon}{2},
\end{array}
\right. 
$$
where $\rho_3(s)=\left \{ \begin{array} {ll} 1, & \quad \mbox{if} \;  -\frac{1}{2}\leq s \leq \frac{1}{2},
     \\[0.1cm]
0, & \quad \mbox{otherwise}.
\end{array} \right.$
\end{example}
\begin{example}
$$
P_4(\varepsilon, t) = \int _{-\infty}^{\frac{t}{\varepsilon}}(t-\varepsilon s)
\,\rho_4(s) \, ds=\left \{ \begin{array} {ll} 0, & \quad \mbox{if} \; \; t < 0,\\[0.1cm]
\frac{t^2}{2\varepsilon},& \quad \mbox{if} \; \; 0 \leq t \leq \varepsilon,\\[0.1cm]
t - \frac{\varepsilon}{2}, & \quad \mbox{if} \; \; t > \varepsilon\, ,
\end{array}
\right. 
$$
where $\rho_4(s)= \left \{ \begin{array} {ll} 1, & \quad \mbox{if} \; \;  0\leq s \leq 1,
     \\[0.1cm]
0, & \quad \; \mbox{otherwise}.
\end{array} \right.$
\end{example}

\begin{remark} 
The regularization used in the analysis of state constrained semilinear elliptic 
VIs in \cite[section 5.2]{Tiba90} is similar, but different. Indeed, here 
we regularize the special convex nonsmooth modulus function which has the
maximal monotone graph  
\[ \beta(t) := \partial m(t) =
\left\{    \begin{array}{r@{\quad:\quad}l}
                         -1 ~ & t < 0  \\
                    \mbox{\rm [} -1,1 \mbox{\rm] } & t = 0 \\
                         1 ~  & t > 0 \,.
                        \end{array}
                      \right.
\]
So $\beta$ is single-valued a.e. and \cite[p. 113]{Tiba90} regularizes 
$\beta$ to 
\begin{eqnarray*}
&&  \beta_{\varepsilon} (t)  
=\int_{-\infty}^{\infty} \beta(t + \varepsilon ~s)  \rho(s) \,  ds  \\ &&
=   - \int_{-\infty}^{ - \frac{t}{\varepsilon} } \rho(s) \,  ds  ~+~
\int_{-\frac{t}{\varepsilon}}^{\infty} \rho(s) \,  ds \,,
\end{eqnarray*}
what differs from the corresponding
$M_t(\varepsilon,t)$ in (\ref{m-C-2}) - apart from the regularizing kernel
in \cite{Tiba90} assumed to be $C^{\infty}$ with support in $[0,1]$, whereas we require that  the density function $\rho$ has finite absolute mean.
 \end{remark}

\subsection{Regularizing the VI of second kind}

 Using the smoothing function $M_\varepsilon := M(\varepsilon, \cdot)$ to the 
modulus function $m =|\cdot|$, we define the smoothing approximation to $s$ by  
\begin{equation} \label{sub-mod-reg} 
S_\varepsilon(f,v):= \int\limits_D f~ M_\varepsilon (\gamma(v)) ~ ds , ~\, 
\forall f \in L^{\infty}(D), v \in V , \varepsilon \in \R_{++}\,.
\end{equation}  

Thus with $ \varepsilon \in \R_{++} $ fixed, we can now regularize the above VI of second kind (\ref{VI}) by the following VI: 
Given $e \in \mbox{ int } E_{+} $ and
$f \in F_{+} $,
find  $u_\varepsilon \in V$  such that 
\begin{equation} \label{VI-reg} 
t(e; u_\varepsilon , v-u_\varepsilon)~+~ S_\varepsilon(f,v)~-~S_\varepsilon(f;u_\varepsilon) ~\geq~ l(v-u_\varepsilon)  \,,\, \forall v \in V \,.
\end{equation}

Since $M'_\varepsilon$ is bounded,
 we obtain in virtue of the Lebesgue theorem of majorized convergence 
\begin{eqnarray*}
 \frac{d}{dt} S_\varepsilon(f, u + t w ) |_{t=0} 
&=& \int_D f ~ M'_\varepsilon (\gamma u) \gamma(w)  ~ ds \\ 
&= &
( f ~ M'_\varepsilon(\gamma u), \gamma(w) ) \\
&= & \langle \gamma^* (f ~ M'_\varepsilon(\gamma u)), w \rangle_{V \times V} \,,
\end{eqnarray*}
hence the partial derivative of $S_\varepsilon$ with respect to $v$:
\begin{equation} \label{Deriv} 
D_v S_\varepsilon (f,u) = \gamma^* (f ~ M'_\varepsilon(\gamma u)) \in V \,.
\end{equation}

Further $S_\varepsilon(f, \cdot)$ is convex.
Therefore the VI (\ref{VI-reg})  
is indeed equivalent to the {\it variational equation}:
Find  $u_\varepsilon \in V$  such that   
\begin{equation} \label{VE-reg} 
T(e)  u_\varepsilon ~+~ D_v S_\varepsilon(f,u_\varepsilon) =  l \,,
\end{equation}
where the parameter dependent linear operator $T(e)$ is defined by
$$ 
\langle T(e) v , w \rangle  = t(e; v, w) \,.
$$
 
By the classic theory of variational inequalities \cite{KiSt}
there is a unique solution $u$ of (\ref{VI})
 and a unique solution $u_\varepsilon$ of (\ref{VI-reg}) 
for given "ellipticity" parameter
$e$ in $E^{\mbox{\tiny ad}}$ and
  "friction" parameter
$f$ in $F^{\mbox{\tiny ad}}$.
This leads to uniquely defined solution maps 
$(e,f) \in E^{\mbox{\tiny ad}} \times F^{\mbox{\tiny ad}}
\mapsto u = {\cal S}(e,f)$,
$(e,f) \in E^{\mbox{\tiny ad}} \times F^{\mbox{\tiny ad}}
\mapsto u_\varepsilon = {\cal S}_\varepsilon(e,f)$, respectively.
In \cite{Gwi18} we have shown that the solution map ${\cal S}$
is Lipschitz both in 
ellipticity and in friction parameter. However, the parameter-to-solution map ${\cal S}$ is not smooth in general. 

An advantage of replacing the variational inequality by the regularized  equation is that for the latter the parameter-to-solution map
 ${\cal S}_\varepsilon$  is {\it smooth} as shown below.

\begin{theorem} \label{sol-map-smooth} 
Let $\varepsilon >0$.
\begin{enumerate}
\item 
Fix $f \in F^{\mbox{\tiny ad}}$. 
Then the map $e \mapsto v_\varepsilon(e) :=
{\cal S}_\varepsilon(e,f)$ is differentiable at any point $e$ in the interior of $E^{\mbox{\tiny ad}}.$ 
For any direction $\delta e\in \hat E$, the derivative 
$\delta v_\varepsilon= Dv_\varepsilon (e)(\delta e)$ is the unique solution of
the following equation:
\begin{equation}\label{DFVE-1}
T(e) \delta v_\varepsilon + 
\gamma^*(f M_\varepsilon^{''}(\gamma v_\varepsilon)(\gamma \delta v_\varepsilon))
=-T(\delta e) v_\varepsilon 
\end{equation}
\item 
Fix $e \in E^{\mbox{\tiny ad}}$. 
Then the map $f \mapsto w_\varepsilon(f) :=
{\cal S}_\varepsilon(e,f)$ is differentiable at any point $f$ in the interior of $F^{\mbox{\tiny ad}}.$ 
For any direction $\delta f\in \hat F$, the derivative 
$\delta w_\varepsilon= Dw_\varepsilon (f)(\delta f)$ is the unique solution of
the following equation:
\begin{equation}\label{DFVE-2}
T(e) \delta w_\varepsilon + 
\gamma^*(f M_\varepsilon^{''}(\gamma w_\varepsilon)(\gamma \delta w_\varepsilon))
=- \gamma^*(\delta f M'_\varepsilon (\gamma w_\varepsilon)) 
\end{equation}
\item
The map $(e,f) \in E^{\mbox{\tiny ad}} \times F^{\mbox{\tiny ad}}
\mapsto u_\varepsilon = {\cal S}_\varepsilon(e,f)$
is differentiable at any point $(e,f)$ in the interior of 
$E^{\mbox{\tiny ad}} \times F^{\mbox{\tiny ad}}.$ 
From (\ref{DFVE-1}),(\ref{DFVE-2}) one obtains the derivative 
$\delta u_\varepsilon= Du_\varepsilon (e,f)(\delta e,\delta f)
 =(\delta v_\varepsilon,\delta w_\varepsilon)$ for any direction 
 $(\delta e,\delta f) \in \hat E \times \hat F$.
\end{enumerate}
\end{theorem}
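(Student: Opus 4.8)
The plan is to realise $\mathcal{S}_\varepsilon$ as the solution operator of the regularized variational equation (\ref{VE-reg}) and to apply the implicit function theorem in the Hilbert space $V$. To this end I introduce the map
\begin{equation*}
G : E \times F \times V \to V, \qquad G(e,f,u) := T(e)u + \gamma^*\bigl(f\,M'_\varepsilon(\gamma u)\bigr) - l ,
\end{equation*}
so that, by (\ref{VE-reg}), $u_\varepsilon = \mathcal{S}_\varepsilon(e,f)$ is characterised by $G(e,f,u_\varepsilon)=0$. First I would record the three partial derivatives at a generic point $(e,f,u)$: since $t$ is trilinear and $S_\varepsilon$ is linear in its first argument,
\begin{equation*}
D_eG(e,f,u)\,\delta e = T(\delta e)u, \qquad D_fG(e,f,u)\,\delta f = \gamma^*\bigl(\delta f\,M'_\varepsilon(\gamma u)\bigr),
\end{equation*}
while differentiating the superposition term as in the computation leading to (\ref{Deriv}) gives
\begin{equation*}
D_uG(e,f,u)\,\delta u = T(e)\delta u + \gamma^*\bigl(f\,M''_\varepsilon(\gamma u)(\gamma\delta u)\bigr).
\end{equation*}

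The decisive structural point is that $D_uG(e,f,u)$ is a bounded, coercive, hence boundedly invertible operator on $V$ for every $(e,f)\in E^{\mathrm{ad}}\times F^{\mathrm{ad}}$. Testing with $\delta u$ and using the $L^2(D)$-representation of $\gamma^*$,
\begin{equation*}
\langle D_uG(e,f,u)\delta u,\delta u\rangle = t(e;\delta u,\delta u) + \int_D f\,M''_\varepsilon(\gamma u)\,(\gamma\delta u)^2\,ds \ \ge\ \underline{t}\,\|\delta u\|_V^2 ,
\end{equation*}
because the $V$-ellipticity (\ref{tri-est-2}) bounds the first term from below, while the second term is nonnegative: $f\ge 0$ on $F_+$ and $M''_\varepsilon\ge 0$ by (\ref{m-C-2}), i.e. $M_\varepsilon$ is convex (Corollary \ref{p-reg-cor}). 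Boundedness of $D_uG$ follows from (\ref{tri-est-1}) together with the boundedness of $M''_\varepsilon$ for fixed $\varepsilon>0$, so Lax--Milgram (the Riesz representation) yields the bounded inverse. Thus the convexity of the smoothed modulus is exactly what keeps the linearisation uniformly elliptic.

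The main obstacle lies in the genuine (Fr\'echet) differentiability of the superposition term $u \mapsto \gamma^*(f\,M'_\varepsilon(\gamma u))$, since the Nemytskii operator induced by the non-affine $M'_\varepsilon$ is in general \emph{not} Fr\'echet differentiable as a self-map of $L^2(D)$. For the contact problems, where $\gamma$ is a trace, I would exploit that $\gamma$ maps $V$ continuously into $H^{1/2}(D)\hookrightarrow L^p(D)$ with $p>2$; for this integrability gap the operator $g\mapsto M'_\varepsilon(g)$ from $L^p(D)$ into $L^2(D)$ is continuously Fr\'echet differentiable (its derivative being multiplication by $M''_\varepsilon(g)$), and composition with the bounded linear maps $\gamma,\gamma^*$ returns a $C^1$ map $V\to V$. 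For the Bingham/plasticity case $\gamma=\nabla$ provides no such gain, and I would instead argue directly on difference quotients: with $v:=v_\varepsilon(e)$, $v^\tau:=v_\varepsilon(e+\tau\delta e)$ and $z^\tau:=\tau^{-1}(v^\tau-v)$, subtracting the two instances of (\ref{VE-reg}) and testing with $z^\tau$, the monotone term $\int_D f\,\tau^{-2}[M'_\varepsilon(\gamma v^\tau)-M'_\varepsilon(\gamma v)](\gamma v^\tau-\gamma v)\,ds\ge 0$ combines with (\ref{tri-est-2}) to bound $\{z^\tau\}$ in $V$; a mean-value representation $M''_\varepsilon(\xi^\tau)\,\gamma z^\tau$ of the nonlinear quotient, together with weak compactness and the continuity and boundedness of $M''_\varepsilon$, then lets me pass to the limit.

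Finally, the implicit function theorem (respectively the limit passage above) shows that $(e,f)\mapsto u_\varepsilon$ is differentiable at interior points, and the derivatives are read off by differentiating the identity $G(e,f,\mathcal{S}_\varepsilon(e,f))\equiv 0$. Freezing $f$ gives $T(\delta e)v_\varepsilon + D_uG\,\delta v_\varepsilon=0$, which is precisely (\ref{DFVE-1}); freezing $e$ gives $\gamma^*(\delta f\,M'_\varepsilon(\gamma w_\varepsilon)) + D_uG\,\delta w_\varepsilon=0$, which is (\ref{DFVE-2}); and the unique solvability of both is exactly the invertibility of $D_uG$ established in the second step. Part~3 is then the chain rule and the linearity of the total derivative, $Du_\varepsilon(e,f)(\delta e,\delta f)=\delta v_\varepsilon+\delta w_\varepsilon$, obtained by superposing the two contributions.
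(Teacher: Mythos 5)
Your proposal follows essentially the same route as the paper: both realise ${\cal S}_\varepsilon$ as the solution operator of the variational equation \eqref{VE-reg}, apply the implicit function theorem to the residual map $G$, invert the linearisation $D_uG$ by coercivity (using \eqref{tri-est-2} together with $f\ge 0$ and $M''_\varepsilon\ge 0$ from \eqref{m-C-2}), and read off \eqref{DFVE-1}, \eqref{DFVE-2} by differentiating the identity $G(e,f,{\cal S}_\varepsilon(e,f))=0$. Where you go beyond the paper is in confronting the Fr\'echet differentiability of the superposition term $u\mapsto\gamma^*(f\,M'_\varepsilon(\gamma u))$: the paper simply writes down $D_vG$ and invokes the implicit function theorem, implicitly assuming the Nemytskii operator induced by $M'_\varepsilon$ is $C^1$ from $L^2(D)$ to $L^2(D)$, which is false in general for non-affine $M'_\varepsilon$. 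Your two remedies --- exploiting the integrability gap $\gamma:V\to H^{1/2}(D)\hookrightarrow L^p(D)$, $p>2$, in the trace case, and a direct difference-quotient/monotonicity argument when $\gamma=\nabla$ --- supply exactly the justification the paper omits, so your write-up is if anything the more complete one. A minor point of divergence in part~3: you correctly present the total derivative as the sum $\delta v_\varepsilon+\delta w_\varepsilon\in V$ of the two partial contributions, whereas the paper's notation $(\delta v_\varepsilon,\delta w_\varepsilon)$ records them as a pair; your reading is the standard one since $u_\varepsilon$ takes values in $V$.
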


\begin{proof} 
\begin{enumerate}
\item 
The differentiability  of the map
 $e \mapsto v_\varepsilon(e)=
{\cal S}_\varepsilon(e,f)$
follows by applying the implicit function theorem to
the map $G:\hat E\times V\to V$ mapping $(e,v)\mapsto 
T(e) v + \gamma^* (f M'_\varepsilon (\gamma v)) $.
 The derivative
$D_{v}G(e,v): V\rightarrow V$
 is given by 
$D_{v}G(e,v)(\delta v) =
T(e) \delta v +
 \gamma^* (f M^{''}_\varepsilon (\gamma v)(\gamma \delta v)) $. 
For every $l\in V$, the variational equation
\begin{equation*}
t(e;\delta v,w) + 
(f M^{''}_\varepsilon (\gamma v)(\gamma \delta v),\gamma w)_{L^2(D) \times L^2(D)} 
=\left\langle l,w \right\rangle_{V\times V}, \forall w \in V
\end{equation*}
possesses a unique solution $\delta v$,
since $t$ is uniformly coercive by \eqref{tri-est-2}
and  $M^{''}_\varepsilon (\gamma v) \ge 0$ 
by \eqref{m-C-2} and $\rho \ge 0$ . 
Therefore $D_{v}G(e,\cdot)(v):V\rightarrow V$ is surjective and the differentiability follows from the implicit function theorem.
From  \eqref{VE-reg}  we have
\begin{equation*}
T(\delta e) v_\varepsilon + T(e) \delta v_\varepsilon +
\gamma^* (f M^{''}_\varepsilon (\gamma v_\varepsilon)
(\gamma \delta v_\varepsilon)) =0,
\end{equation*}
 and \eqref{DFVE-1} follows.
\item
Similarly the differentiability  of the map
 $f \mapsto w_\varepsilon(f)=
{\cal S}_\varepsilon(e,f)$
follows by applying the implicit function theorem to
the map $H:\hat F\times V\to V$ mapping $(f,w)\mapsto 
T(e) w + \gamma^* (f M'_\varepsilon (\gamma w)) $.
 The derivative
$D_{w} H(f,w):  V\rightarrow V$  is given by 
$D_{w} H(f,w)(\delta w) =
T(e) \delta w +
 \gamma^* (f M^{''}_\varepsilon (\gamma w)(\gamma \delta w)) $. 
Similarly as above  $D_{w} H(f,\cdot)(w):V\rightarrow V$ is 
seen to be surjective and the differentiability follows from the implicit function theorem. Again from  \eqref{VE-reg} we have
\begin{equation*}
T(e) \delta w_\varepsilon +
 \gamma^*(\delta f M'_\varepsilon (\gamma w_\varepsilon)) +
\gamma^* (f M^{''}_\varepsilon (\gamma w_\varepsilon)
(\gamma \delta w_\varepsilon)) =0,
\end{equation*}
 and \eqref{DFVE-2} follows.
\item
Clear from above. 
\end{enumerate}
The proof is complete.
\end{proof}

\begin{remark} 
The assumption on interior points in Theorem \ref{sol-map-smooth}
does not pose a restriction of generality.
Indeed, some feasible $f \in F^{\mbox{\tiny ad}} = 
\{ f \in L^{infty}_{+} (D):~\underline{f} \leq f \leq \overline{f} 
~~\mbox{a.e.~on } D \}  =:F_{\underline{f}}^{\overline{f}} $
is an interior point of $F_{\underline{f}/2}^{2 \overline{f}}$
and so apply the above result in the latter larger set, simlarly 
for  $e \in E^{\mbox{\tiny ad}}$. 
\end{remark}

\subsection{An estimate of the regularization error}

To conclude this section we give an estimate of the error between
the solution $u = {\cal S}(e,f)$ of the original VI (\ref{VI})
and the solution $u_\varepsilon = {\cal S}_\varepsilon(e,f)$
of the regularized VI (\ref{VI-reg}).

\begin{theorem} \label{theo-reg-error}
Let  the density function $\rho : \R \to \R _{+}$ satisfy (\ref{fin-abs-mean}).
Let $e \in E^{\mbox{\tiny ad}}$ and $f  \in F^{\mbox{\tiny ad}}$.
Then for the solution $u = {\cal S}(e,f)$ of the original VI (\ref{VI})
and the solution $u_\varepsilon = {\cal S}_\varepsilon(e,f)$
of the regularized VI (\ref{VI-reg}) there holds the error estimate
 \begin{equation}\label{reg-error-est}
	\| S_\varepsilon (e,f) - S(e,f) \|_V  = 
{\cal O}(\varepsilon^{\frac{1}{2}})  \,. 
 \end{equation}
\end{theorem}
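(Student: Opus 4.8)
The plan is to carry out the classical energy comparison between the two variational inequalities, using coercivity of $t(e;\cdot,\cdot)$ on the left and the uniform modulus-approximation bound of Corollary \ref{p-reg-cor} on the right. Writing $u = \mathcal S(e,f)$ and $u_\varepsilon = \mathcal S_\varepsilon(e,f)$, I would first test the original VI \eqref{VI} with $v = u_\varepsilon$ and the regularized VI \eqref{VI-reg} with $v = u$, obtaining
\begin{align*}
t(e; u, u_\varepsilon - u) + s(f; u_\varepsilon) - s(f; u) &\geq l(u_\varepsilon - u), \\
t(e; u_\varepsilon, u - u_\varepsilon) + S_\varepsilon(f; u) - S_\varepsilon(f; u_\varepsilon) &\geq l(u - u_\varepsilon).
\end{align*}
Adding these two inequalities, the right-hand sides cancel, and by bilinearity of $t(e;\cdot,\cdot)$ the bilinear contributions collapse to $-t(e; u_\varepsilon - u, u_\varepsilon - u)$, yielding
$$t(e; u_\varepsilon - u, u_\varepsilon - u) \leq \big[s(f; u_\varepsilon) - S_\varepsilon(f; u_\varepsilon)\big] - \big[s(f; u) - S_\varepsilon(f; u)\big].$$

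For the left-hand side I would invoke the coercivity estimate \eqref{tri-est-2}, giving the lower bound $\underline t\,\|u_\varepsilon - u\|_V^2$. The decisive step is the treatment of the right-hand side. From the definitions \eqref{sub-mod} and \eqref{sub-mod-reg} and the uniform bound $|M_\varepsilon(t) - m(t)| \le 2k\varepsilon$ of Corollary \ref{p-reg-cor}, I obtain for every $v \in V$
$$|s(f; v) - S_\varepsilon(f; v)| = \Big| \int_D f\,\big(m(\gamma v) - M_\varepsilon(\gamma v)\big)\, ds \Big| \le 2k\varepsilon\,\|f\|_{L^1(D)},$$
a bound that does not depend on $v$. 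Applying it to both brackets (once at $v = u_\varepsilon$, once at $v = u$) gives $\underline t\,\|u_\varepsilon - u\|_V^2 \le 4k\varepsilon\,\|f\|_{L^1(D)}$, hence $\|u_\varepsilon - u\|_V \le \big(4k\,\|f\|_{L^1(D)}/\underline t\big)^{1/2}\varepsilon^{1/2}$, which is precisely the claimed $\mathcal O(\varepsilon^{1/2})$ rate. Because $f \le \overline f$ forces $\|f\|_{L^1(D)} \le |D|\,\|\overline f\|_{L^\infty(D)}$, the constant is even uniform over $f \in F^{\mbox{\tiny ad}}$.

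The argument is energetic and requires no regularity or differentiability of the solution maps, so there is no serious analytic obstacle. The single point demanding care is that the nonsmooth terms cannot be estimated through any explicit pointwise knowledge of the traces $\gamma u$ and $\gamma u_\varepsilon$, which are unavailable; the whole estimate therefore hinges on the fact that the modulus-approximation error in Corollary \ref{p-reg-cor} is uniform in its real argument, and on the linearity of $s$ and $S_\varepsilon$ in $f$, which together let the factor $f$ be pulled out and controlled by $\|f\|_{L^1(D)}$. The loss of one power (the $\varepsilon^{1/2}$ rather than $\varepsilon$) is the expected consequence of passing through the squared energy norm.
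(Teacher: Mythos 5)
Your proof is correct and follows essentially the same route as the paper: test each VI with the other's solution, add, use the coercivity bound \eqref{tri-est-2} on the left and the uniform estimate \eqref{m-reg-est} on the right to get $\underline t\,\|u_\varepsilon-u\|_V^2\le 4k\varepsilon\,\|f\|_{L^1(D)}$. The only cosmetic difference is that the paper bounds the right-hand side directly by $4c\,\varepsilon\,|D|\,\|f\|_{L^\infty(D)}$ rather than passing through $\|f\|_{L^1(D)}$.
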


\begin{proof}
Inserting $v=u_\varepsilon$ in the original VI (\ref{VI}) and $v=u$ in the regularized VI (\ref{VI-reg}), gives
\begin{align*}
t(e;u,u_\varepsilon - u) + \int_D f ~[m(\gamma(u_\varepsilon)) - m(\gamma(u))] ~ds
&\geq l(u_\varepsilon - u) \,,\\
t(e;u_\varepsilon,u-u_\varepsilon) + \int_D f ~ 
[M_\varepsilon(\gamma(u)) - M_\varepsilon(\gamma(u_\varepsilon))] ~ds
&\geq l(u - u_\varepsilon) \,. 
\end{align*}
A rearrangement of the above yields, where we use 
 (\ref{tri-est-2}) and Corollary \ref{p-reg-cor}, (\ref{m-reg-est}),    
\begin{eqnarray*}
&& \underline{t} \|u_\varepsilon - u \|_V^2 \le
t(e;u_\varepsilon - u, u_\varepsilon - u)  \\   
&& \le  
\int_D f ~ [m(\gamma(u_\varepsilon)) - M_\varepsilon(\gamma(u_\varepsilon))
+ M_\varepsilon(\gamma(u))   - m(\gamma(u)) ] ~ds \\
&& \le 4 c ~ \varepsilon ~ |D| \|f\|_{L^\infty(D)}  \,,
\end{eqnarray*}
where $|D|$ is the Lebesgue measure of $D$. This proves the claimed error estimate.
\end{proof}

\section{ The optimization approach }\label{sec:4}

Let an observation $\grave{v} \in V$ be  given.
Then the parameter identification problem studied in this paper reads:
Find parameters  
$e \in E^{\mbox{\tiny ad}}, f \in F^{\mbox{\tiny ad}}$
such that $ u = {\cal S}(e,f)$ "fits best" $\grave v$, 
where $u \in V$ satisfies the VI (\ref{VI}), namely 
$$t(e;u,v-u)  + s(f;v) - s(f;u) ~\geq ~ l(v-u), \forall v \in V \,.$$
Similar to \cite{Hin01,Gon06,GocKha07} and similar to parameter estimation in linear elliptic equations \cite{BanKun}
we follow an optimization approach and introduce the 
"misfit function"
  $$J(e,f) := \frac{1}{2}~ \| {\cal S}(e,f) - \grave{v} \|^2  $$
to be minimized.

Here we assume similar to \cite{Hin01} that the sought 
ellipticity and friction parameters
are smooth enough to satisfy with compact imbeddings
$$ E^{\mbox{\tiny ad}} \subset \hat{E}  \subset \subset E; F^{\mbox{\tiny ad}} \subset \hat{F}  \subset \subset F = L^\infty(D) \subset L^2(D) . \, $$

Some examples are in order. By the Rellich-Kondrachev Theorem \cite[Theorem 6.3]{AdaFou},
$H^1(\Omega) \subset \subset C^0_B(\Omega)$, the space of bounded, continuous functions on $\Omega$,
provided $\Omega$ satisfies the cone condition; clearly $C^0_B(\Omega) \subset L^\infty(\Omega)$.
Thus $\hat{E} = H^1(\Omega) \subset \subset L^\infty(\Omega)$. 
More general Sobolev spaces of fractional order can also be used, see e.g. \cite{AssRoe13}
treating the identification of the ellipticity parameter in linear elliptic Dirichlet problems.

For simplicity let $\hat{E}, \hat{F}$ be Hilbert spaces (or more generally reflexive Banach spaces).
Thus with given weights $\alpha > 0, \beta > 0$ we pose the stabilized optimization problem

\begin{eqnarray*}
(OP) \quad && \mbox{minimize }
J(e,f) + \frac{\alpha}{2}~ \|e\|_{\hat{E}}^2 + 
\frac{\beta}{2} \|f\|_{\hat{F}}^2 \\[1ex]
 && \mbox{subject to } e\in E^{\mbox{\tiny ad}},\, f \in F^{\mbox{\tiny ad}}
\end{eqnarray*} 

Under these assumptions we have the following solvability theorem. 

\begin{theorem} \label{exis-orig}
Suppose the above compact imbeddings.
Suppose that the trilinear form $t$ satisfies
(\ref{tri-est-1}) and (\ref{tri-est-2})
and that the semisublinear form $s$ is given by (\ref{sub-mod}).
Then $(OP)$ admits an optimal (not necessarily unique!) solution 
 $(e^*,f^*,u) \in E^{\mbox{\tiny ad}} \times F^{\mbox{\tiny ad}} \times V$, 
where $u = {\cal S}(e^*,f^*)$. i.e.
$u\in V$ solves the VI (\ref{VI}) for the optimal parameter $(e^*,f^*).$ 
 \end{theorem}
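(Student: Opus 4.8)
The plan is to establish existence of a minimizer via the direct method of the calculus of variations. The objective functional $J(e,f) + \frac{\alpha}{2}\|e\|_{\hat E}^2 + \frac{\beta}{2}\|f\|_{\hat F}^2$ is nonnegative, so its infimum $m$ over the feasible set $E^{\mbox{\tiny ad}} \times F^{\mbox{\tiny ad}}$ is finite. First I would take a minimizing sequence $(e_n, f_n) \in E^{\mbox{\tiny ad}} \times F^{\mbox{\tiny ad}}$ with objective values tending to $m$. Since the objective dominates $\frac{\alpha}{2}\|e_n\|_{\hat E}^2 + \frac{\beta}{2}\|f_n\|_{\hat F}^2$, the sequence $(e_n,f_n)$ is bounded in $\hat E \times \hat F$. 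As $\hat E, \hat F$ are reflexive Banach spaces (indeed Hilbert spaces), I extract a subsequence (not relabeled) converging weakly, $e_n \rightharpoonup e^*$ in $\hat E$ and $f_n \rightharpoonup f^*$ in $\hat F$. By the compact imbeddings $\hat E \subset\subset E$ and $\hat F \subset\subset F = L^\infty(D)$, this weak convergence upgrades to strong convergence $e_n \to e^*$ in $E$ and $f_n \to f^*$ in $F$. Because $E^{\mbox{\tiny ad}}$ and $F^{\mbox{\tiny ad}}$ are convex and closed, they are weakly closed, so the limit $(e^*,f^*)$ remains feasible.

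Next I would show that the corresponding solutions converge. Writing $u_n := {\cal S}(e_n,f_n)$ and $u^* := {\cal S}(e^*,f^*)$, the aim is to prove $u_n \to u^*$ in $V$, i.e. continuity of the solution map ${\cal S}$ with respect to strong convergence of parameters in $E \times F$. The natural route is the stability estimate underlying the Lipschitz property of ${\cal S}$ established in \cite{Gwi18}: testing the VI (\ref{VI}) for $(e_n,f_n)$ against $u^*$ and the VI for $(e^*,f^*)$ against $u_n$, adding, and using coercivity (\ref{tri-est-2}) yields a bound of the form
\begin{equation*}
\underline{t}\,\|u_n - u^*\|_V^2 \le |t(e_n - e^*; u^*, u_n - u^*)| + |s(f_n; u^*) - s(f_n; u_n) - s(f^*; u^*) + s(f^*; u_n)|,
\end{equation*}
where the first term is controlled using (\ref{tri-est-1}) by $\overline{t}\,\|e_n - e^*\|_E\,\|u^*\|_V\,\|u_n - u^*\|_V$, and the second term, after recalling $s(f;v) = (f,|\gamma(v)|)$ from (\ref{sub-mod}), is bounded by the difference $\|f_n - f^*\|_{L^\infty(D)}$ times $L^2$ norms of $|\gamma u_n| - |\gamma u^*|$ via the nonexpansiveness of the modulus together with continuity of the trace $\gamma$. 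Dividing through and using that $u_n$ stays bounded in $V$, I would conclude $\|u_n - u^*\|_V \to 0$ as $\|e_n - e^*\|_E \to 0$ and $\|f_n - f^*\|_F \to 0$.

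Finally I would pass to the limit in the objective. From $u_n \to u^*$ strongly in $V$ the misfit term converges, $J(e_n,f_n) = \tfrac{1}{2}\|u_n - \grave v\|^2 \to \tfrac{1}{2}\|u^* - \grave v\|^2 = J(e^*,f^*)$. The regularization terms are weakly lower semicontinuous with respect to the weak convergence $e_n \rightharpoonup e^*$, $f_n \rightharpoonup f^*$ (norms on reflexive spaces are weakly l.s.c.), so $\|e^*\|_{\hat E}^2 \le \liminf \|e_n\|_{\hat E}^2$ and likewise for $f$. Combining, the full objective at $(e^*,f^*)$ is $\le \liminf$ of the objective along the sequence $= m$, so $(e^*,f^*)$ attains the infimum, and setting $u = {\cal S}(e^*,f^*)$ gives the asserted optimal triple.

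The main obstacle I expect is the convergence of the solutions $u_n \to u^*$, specifically the handling of the nonsmooth semisublinear term. The bilinear part is routine, but controlling $s(f_n;u^*) - s(f_n;u_n) - s(f^*;u^*) + s(f^*;u_n)$ requires care: one must exploit both the linearity of $s$ in its first argument (to factor out $f_n - f^*$ against terms bounded uniformly in $n$) and the $1$-Lipschitz property of $|\cdot|$ composed with the continuity of $\gamma: V \to L^2(D)$ (to absorb a factor $\|u_n - u^*\|_V$ on the correct side of the inequality). The borderline case where $\underline f$ may vanish does not cause trouble since $f \ge 0$ only enters through upper bounds here. This stability argument is essentially the one behind the Lipschitz result of \cite{Gwi18}, so it may simply be cited, but the clean passage from strong parameter convergence to strong state convergence is the crux of the proof.
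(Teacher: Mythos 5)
Your proof is correct and is essentially the argument the paper relies on: the paper's own proof is just a pointer to \cite[Theorem 4.1]{Gwi18}, and the scheme there (mirrored in Steps 1--3 of the proof of Theorem \ref{exis-reg-conv} for the regularized analogue) is exactly your direct method --- coercivity of the Tikhonov terms giving boundedness in $\hat E \times \hat F$, weak compactness upgraded to strong $E \times F$ convergence via the compact imbeddings, Lipschitz stability of ${\cal S}$ to pass to the limit in the state, and weak lower semicontinuity of the norms. Your treatment of the crux, namely bounding $s(f_n;u^*)-s(f_n;u_n)+s(f^*;u_n)-s(f^*;u^*) = (f_n-f^*,\,|\gamma u^*|-|\gamma u_n|)$ by the $1$-Lipschitz property of the modulus and the continuity of $\gamma$, is precisely the right estimate.
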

 
\begin{proof}
For details see the proof of \cite[Theorem 4.1]{Gwi18}.
\end{proof}

We now consider an  analogue of $(OP)$  where the underlying variational inequality has been replaced by a regularized variational inequality
that is equivalent to a variational equation:
\begin{eqnarray*}
(OP)_\varepsilon \quad && \mbox{minimize }
J_\varepsilon (e,f) + \frac{\alpha}{2}~ \|e\|_{\hat{E}}^2 + 
\frac{\beta}{2} \|f\|_{\hat{F}}^2 \\[1ex]
 && \mbox{subject to } e\in E^{\mbox{\tiny ad}},\, f \in F^{\mbox{\tiny ad}} \,,
\end{eqnarray*} 
where
$$
J_\varepsilon(e,f) := \frac{1}{2}~ \| {\cal S}_\varepsilon (e,f) - \grave{v} \|^2 \,. 
$$

We give the following existence and convergence result:

\begin{theorem} \label{exis-reg-conv} 
Suppose the above compact imbeddings.
Suppose that the trilinear form $t$ satisfies
(\ref{tri-est-1}) and (\ref{tri-est-2})
and that the semisublinear form $s$ is given by (\ref{sub-mod}).
Moreover, let  the density function $\rho : \R \to \R _{+}$ 
in the definition  (\ref{sub-mod-reg}) of the regularizing function $S_\varepsilon$ satisfy (\ref{fin-abs-mean}).
Then for any $\varepsilon >0,~ (OP)_\varepsilon$ has an optimal solution 
$(\bar{e}_\varepsilon, \bar{f}_\varepsilon).$ Moreover for any 
sequence $\{ \varepsilon_k \}_{k \in \N} \subset \R_{++}$ with $\varepsilon_k \to 0$ for $k \to \infty$, there is a subsequence 
$\{ (\bar{e}_n,\bar{f}_n,\bar{u}_n) \}_{n \in N}$, where 
$\bar{u}_n= {\cal S}_{\varepsilon_n} (\bar{e}_n,\bar{f}_n)$ 
for $ n \in N \subset \N$,  such that for $n \to \infty$ we have 
$(\bar{e}_n,\bar{f}_n) \to (\tilde{e},\tilde{f})$ in $E \times F$,
 $\bar{u}_n\to \tilde{u}$ in $V$ where $(\tilde{e},\tilde{f})$ is a solution of $(OP)$ and $\tilde{u}={\cal S}(\tilde{e},\tilde{f})$.
\end{theorem}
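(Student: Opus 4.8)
The plan is to prove the two assertions separately: existence of minimizers of $(OP)_\varepsilon$ by the direct method of the calculus of variations, and convergence of the regularized minimizers by coupling weak compactness of the feasible set with the error estimate of Theorem \ref{theo-reg-error}. For existence, I would fix $\varepsilon > 0$ and take a minimizing sequence $(e_j, f_j) \in E^{\mbox{\tiny ad}} \times F^{\mbox{\tiny ad}}$ for $(OP)_\varepsilon$. Since $E^{\mbox{\tiny ad}}$ and $F^{\mbox{\tiny ad}}$ are bounded, closed and convex in the reflexive spaces $\hat{E}, \hat{F}$, I can extract a subsequence with $(e_j, f_j) \rightharpoonup (\bar{e}_\varepsilon, \bar{f}_\varepsilon)$ weakly in $\hat{E} \times \hat{F}$, the limit still lying in $E^{\mbox{\tiny ad}} \times F^{\mbox{\tiny ad}}$ by weak closedness of closed convex sets, and the compact imbeddings $\hat{E} \subset \subset E$, $\hat{F} \subset \subset F$ upgrade this to strong convergence in $E \times F$. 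The key fact needed is continuity of ${\cal S}_\varepsilon$ with respect to the strong $E \times F$ topology; this follows exactly as the Lipschitz property of ${\cal S}$ in \cite{Gwi18}, by testing the difference of the two regularized variational equations (\ref{VE-reg}) with $u_j - \bar{u}_\varepsilon$ and using the uniform coercivity (\ref{tri-est-2}) together with the boundedness of $M'_\varepsilon$ from Corollary \ref{p-reg-cor}. This yields $J_\varepsilon(e_j, f_j) \to J_\varepsilon(\bar{e}_\varepsilon, \bar{f}_\varepsilon)$, while the stabilizing terms $\frac{\alpha}{2}\|\cdot\|_{\hat{E}}^2$ and $\frac{\beta}{2}\|\cdot\|_{\hat{F}}^2$ are weakly lower semicontinuous; hence the objective is weakly lower semicontinuous along the sequence and $(\bar{e}_\varepsilon, \bar{f}_\varepsilon)$ is a minimizer.

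For the convergence statement I would let $\varepsilon_k \to 0$, pick minimizers $(\bar{e}_k, \bar{f}_k)$ of $(OP)_{\varepsilon_k}$ with states $\bar{u}_k = {\cal S}_{\varepsilon_k}(\bar{e}_k, \bar{f}_k)$, and again extract a subsequence (indexed by $n \in N$) with $(\bar{e}_n, \bar{f}_n) \rightharpoonup (\tilde{e}, \tilde{f})$ weakly in $\hat{E} \times \hat{F}$, hence strongly in $E \times F$, and $(\tilde{e}, \tilde{f}) \in E^{\mbox{\tiny ad}} \times F^{\mbox{\tiny ad}}$. To identify the limiting state I would split
\[
\bar{u}_n - {\cal S}(\tilde{e}, \tilde{f}) =
\left[ {\cal S}_{\varepsilon_n}(\bar{e}_n, \bar{f}_n) - {\cal S}(\bar{e}_n, \bar{f}_n) \right]
+ \left[ {\cal S}(\bar{e}_n, \bar{f}_n) - {\cal S}(\tilde{e}, \tilde{f}) \right] .
\]
The first bracket is ${\cal O}(\varepsilon_n^{\frac{1}{2}})$ by Theorem \ref{theo-reg-error} and tends to $0$, provided the constant there is uniform over $F^{\mbox{\tiny ad}}$; the second bracket tends to $0$ by the Lipschitz continuity of ${\cal S}$ from \cite{Gwi18} and the strong parameter convergence. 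Thus $\bar{u}_n \to \tilde{u} := {\cal S}(\tilde{e}, \tilde{f})$ in $V$.

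Finally, to show that $(\tilde{e}, \tilde{f})$ solves $(OP)$, I would fix an arbitrary competitor $(e, f) \in E^{\mbox{\tiny ad}} \times F^{\mbox{\tiny ad}}$ and invoke optimality of $(\bar{e}_n, \bar{f}_n)$ for $(OP)_{\varepsilon_n}$,
\[
J_{\varepsilon_n}(\bar{e}_n, \bar{f}_n) + \tfrac{\alpha}{2}\|\bar{e}_n\|_{\hat{E}}^2 + \tfrac{\beta}{2}\|\bar{f}_n\|_{\hat{F}}^2
\le J_{\varepsilon_n}(e, f) + \tfrac{\alpha}{2}\|e\|_{\hat{E}}^2 + \tfrac{\beta}{2}\|f\|_{\hat{F}}^2 .
\]
On the right, $J_{\varepsilon_n}(e, f) \to J(e, f)$ since ${\cal S}_{\varepsilon_n}(e, f) \to {\cal S}(e, f)$ in $V$ by Theorem \ref{theo-reg-error}. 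On the left, $J_{\varepsilon_n}(\bar{e}_n, \bar{f}_n) \to \tfrac{1}{2}\|\tilde{u} - \grave{v}\|^2 = J(\tilde{e}, \tilde{f})$ by the state convergence, while weak lower semicontinuity of the norms gives $\frac{\alpha}{2}\|\tilde{e}\|_{\hat{E}}^2 \le \liminf \frac{\alpha}{2}\|\bar{e}_n\|_{\hat{E}}^2$ and likewise for $\tilde{f}$. Passing to $\liminf$ on the left and $\lim$ on the right then delivers the optimality inequality for $(\tilde{e}, \tilde{f})$, and arbitrariness of $(e, f)$ finishes the argument. The main obstacle I anticipate lies in the middle step, namely controlling the two limits $\varepsilon_n \to 0$ and $(\bar{e}_n, \bar{f}_n) \to (\tilde{e}, \tilde{f})$ simultaneously; this rests on the error estimate of Theorem \ref{theo-reg-error} being uniform over the feasible parameters, which one must verify by tracing the dependence of its constant on $\|f\|_{L^\infty(D)}$ (bounded by $\|\overline{f}\|_{L^\infty(D)}$ on $F^{\mbox{\tiny ad}}$) and on $|D|$.
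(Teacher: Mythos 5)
Your proposal is correct in outline and reaches the same conclusion, but it diverges from the paper in the key middle step. For the state convergence $\bar{u}_n \to \tilde{u}$ you use the triangle splitting ${\cal S}_{\varepsilon_n}(\bar{e}_n,\bar{f}_n) - {\cal S}(\tilde{e},\tilde{f}) = [{\cal S}_{\varepsilon_n}(\bar{e}_n,\bar{f}_n) - {\cal S}(\bar{e}_n,\bar{f}_n)] + [{\cal S}(\bar{e}_n,\bar{f}_n) - {\cal S}(\tilde{e},\tilde{f})]$, handling the first term by the regularization error estimate (whose constant is indeed uniform on $F^{\mbox{\tiny ad}}$, being proportional to $\|f\|_{L^\infty(D)} \le \|\overline{f}\|_{L^\infty(D)}$) and the second by the Lipschitz continuity of ${\cal S}$ established in \cite{Gwi18}. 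The paper instead performs a single combined estimate: it tests the original VI at parameter $(\tilde{e},\tilde{f})$ with $v = u_n$ and the regularized VI at parameter $(e_n,f_n)$ with $v = \tilde{u}$, adds the two, and bounds the remainder $R_n$ directly, so that both limits $\varepsilon_n \to 0$ and $(e_n,f_n) \to (\tilde{e},\tilde{f})$ are treated simultaneously without importing the Lipschitz property of ${\cal S}$. Your route is cleaner and modular; the paper's is self-contained within the present arguments. The remaining steps (existence by the direct method, identification of $(\tilde{e},\tilde{f})$ as a minimizer of $(OP)$ via the comparison inequality and weak lower semicontinuity of the norms) coincide with the paper's Steps 1 and 3.

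One imprecision you should repair: you assert that $E^{\mbox{\tiny ad}}$ and $F^{\mbox{\tiny ad}}$ are \emph{bounded} in $\hat{E}$, $\hat{F}$, and you use this to extract weakly convergent subsequences. The hypotheses only give $L^\infty$-type bounds, i.e.\ boundedness in $E \times F$, not in $\hat{E} \times \hat{F}$. The correct source of the bound is the objective itself: for the minimizing sequence of $(OP)_\varepsilon$ the values are bounded, and for the sequence $(\bar{e}_n,\bar{f}_n)$ one compares against a fixed feasible competitor $(e^*,f^*)$ and uses the error estimate to bound $J_{\varepsilon_n}(e^*,f^*)$ uniformly in $n$; the Tikhonov terms $\frac{\alpha}{2}\|\cdot\|_{\hat{E}}^2 + \frac{\beta}{2}\|\cdot\|_{\hat{F}}^2$ then yield the $\hat{E} \times \hat{F}$ bound. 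This is exactly the paper's Step 1, and you already have the needed comparison inequality written down in your final paragraph, so the fix is a reordering rather than a new idea.
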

\begin{proof} For a fixed $\varepsilon >0,$ the existence of a solution $(\bar{e}_\varepsilon,\bar{f}_\varepsilon)$ follows by arguments similar to those used in the proof of Theorem~\ref{exis-orig}.
The proof of the convergence result is split into the following three steps.

{\it Step 1: An a priori estimate and convergence of 
some subsequence $ \{\bar e_n,\bar f_n \}$ to 
some $(\tilde{e},\tilde{f})$}

We observe that for any optimal solution
$(e_k,f_k) := (\bar{e}_{\varepsilon_k}, \bar{f}_{\varepsilon_k})$ of 
$(OP)_{\varepsilon_k}$ and for any optimal solution
$(e^*,f^*).$ of $(OP)$,
\begin{eqnarray*}
&&J_{\varepsilon_k} (e_k,f_k) 
+ \frac{\alpha}{2}~ \|e_k\|_{\hat{E}}^2 + \frac{\beta}{2} \|f_k\|_{\hat{F}}^2 \\
&& = \frac{1}{2}~ \| {\cal S}_{\varepsilon_k} (e_k,f_k) - \grave{v} \|^2
+ ~\frac{\alpha}{2}~ \|e_k\|_{\hat{E}}^2 + \frac{\beta}{2} \|f_k\|_{\hat{F}}^2 \\
&& \le \frac{1}{2}~ \| {\cal S}_{\varepsilon_k} (e^*,f^*) - \grave{v} \|^2
+ ~\frac{\alpha}{2}~ \|e^*\|_{\hat{E}}^2 + \frac{\beta}{2} \|f^*\|_{\hat{F}}^2
 \,.
\end{eqnarray*}
Moreover, by the error estimate (\ref{reg-error-est}),
$$
\| {\cal S}_{\varepsilon_k} (e^*,f^*) - \grave{v} \|
\le \| {\cal S} (e^*,f^*) - \grave{v} \|
 + {\cal O}(\varepsilon_k^{\frac{1}{2}}) \,.
$$
Hence, the boundedness of 
$\|e_k\|_{\hat{E}} +  \|f_k\|_{\hat{F}} $ follows.
Therefore there exists  a subsequence 
$\{ (e_n,f_n) \}_{n \in N}$ with $N \subset \N$ that converges weakly to
some  $(\tilde{e},\tilde{f})
\in  E^{\mbox{\tiny ad}} \times F^{\mbox{\tiny ad}}$ in the reflexive space
$\hat E \times \hat F$. In view of the assumed
compact imbeddings there is a further subsequence again denoted by 
$\{ (e_n,f_n) \}_{n \in N}$,  such that for $n \to \infty$,  
$(e_n,f_n) \to (\tilde{e},\tilde{f}) $ in $E \times F$ strongly.

{\it Step 2: Convergence ${\cal S}_{\varepsilon_n} (e_n,f_n)
\to {\cal S}(\tilde{e},\tilde{f})$}

Let $u_n := {\cal S}_{\varepsilon_n} (e_n,f_n), 
\tilde u := {\cal S}(\tilde{e},\tilde{f})$. Then inserting $v=u_n$ in the original VI (\ref{VI}) with parameter $(\tilde e,\tilde f)$
and $v=\tilde u$ in the regularized VI (\ref{VI-reg}) with parameter
 $(e_n,f_n)$, gives
\begin{align*}
t(\tilde e;\tilde u,u_n - \tilde u) 
+ \int_D f ~[M(\gamma(u_n)) - M(\gamma(u))] ~ds
&\geq l(u_n - \tilde u) \,,\\
t(e_n;u_n, \tilde u-u_n) + 
\int_D f_n ~ [M_{\varepsilon_n}(\gamma(\tilde u)) - M_{\varepsilon_n}(\gamma(u_n))] ~ds &\geq l(\tilde u - u_n) \,. 
\end{align*}
A rearrangement of the above yields by 
 (\ref{tri-est-2}) and  (\ref{tri-est-1}) 
  
\begin{eqnarray*}
&& \underline{t} \|u_n - \tilde u \|_V^2 \le
t(e_n;u_n - \tilde u, u_n - \tilde u)  \\   
&& \le t(\tilde e - e_n;\tilde u, u_n - \tilde u) + R_n \\
&& \le
\overline{t}~\|e_n - \tilde e \|_E ~\| \tilde u \|_V ~\|u_n - \tilde u \|_V
+ R_n \,,
\end{eqnarray*} 
where using Corollary \ref{p-reg-cor}, (\ref{m-reg-est}),
\begin{eqnarray*}
 R_n &=& s(\tilde f, u_n) - S_{\varepsilon_n}(f_n,u_n) + 
S{\varepsilon_n}(f_n,\tilde u) - s(\tilde f,\tilde u) \\
& = & \int_D (\tilde f - f_n) ~ [M(\gamma u_n) 
+ f_n [M(\gamma u_n) - M_{\varepsilon_n}(\gamma u_n)] \\
&& + ~(f_n - \tilde f) M(\gamma \tilde u) 
+ f_n [M(\gamma \tilde u) - M_{\varepsilon_n}(\gamma \tilde u)] ~ds \\
&\le &
 \|\tilde f- f_n\|_{L^\infty(D)} \|\gamma\|_{V \to L^1(D)} \|u_n\|_V
 + 4 c_0 ~ \varepsilon_n ~ |D| \|f_n\|_{L^\infty(D)} \\
&& + ~ \|\tilde f- f_n\|_{L^\infty(D)} \|\gamma\|_{V \to L^1(D)} \|\tilde u\|_V
 +  ~ 4 c_0 ~ \varepsilon_n ~ |D| \|f_n\|_{L^\infty(D)}  \,.
\end{eqnarray*}
These estimates show that 
$$
\underline{t} \|u_n - \tilde u \|_V^2 \le
c_1  \|u_n - \tilde u \|_V + c_2 
$$
holds for some constants $c_1 > 0, c_2 > 0$. 
Hence $\{u_n\}$ is bounded, say
 $\max( \|u_n\|_V, \|u_n - \tilde u \|_V ) 
\le \tilde c < \infty$. 
Thus the above estimates give
$$\underline{t} \|u_n - \tilde u \|_V^2 \le
 \overline{t}~ \tilde c ~\| \tilde u \|_V ~ \|e_n - \tilde e \|_E +R_n \,,
$$
where
$$
R_n \le 
2 \tilde c ~\|\gamma\|_{V \to L^1(D)} ~ \|\tilde f- f_n\|_{L^\infty(D)} 
+  8 c_0 ~ \varepsilon_n ~ |D| \|f_n\|_{L^\infty(D)}
$$
and so $R_n \to 0$ for $n \to \infty$.
The convergence $u_n \to \tilde u$ follows.

{\it Step 3: $(\tilde{e},\tilde{f})$ is a solution of $(OP)$}

Take $(e_0,f_0) \in  E^{\mbox{\tiny ad}} \times F^{\mbox{\tiny ad}}$ arbitrarily.
Let $u_0 := {\cal S}(e_0,f_0), \bar u_n := {\cal S}_{\varepsilon_n} (e_0,f_0)$. 
Then firstly by Theorem \ref{theo-reg-error}, $\bar u_n \to u_0$ in $V$
for $n\in N \to \infty$,
secondly $(e_0,f_0)$ is feasible for the regularized optimization problem
$(OP)_{\varepsilon_n}$. Hence 
\begin{eqnarray*}
&&J (\tilde e,\tilde f) 
+ \frac{\alpha}{2}~ \|\tilde e\|_{\hat{E}}^2 
+ \frac{\beta}{2} \|\tilde f\|_{\hat{F}}^2 \\
&& = \frac{1}{2}~ \| \tilde u - \grave{v} \|^2
+ ~\frac{\alpha}{2}~ \|\tilde e\|_{\hat{E}}^2 
+ \frac{\beta}{2} \|\tilde f\|_{\hat{F}}^2 \\
&& \le \liminf_{n \to \infty}  
\frac{1}{2}~ \| u_n - \grave{v} \|^2
+\liminf_{n \to \infty}  
 ~[ \frac{\alpha}{2}~ \|e_n\|_{\hat{E}}^2 + \frac{\beta}{2} \|f_n\|_{\hat{F}}^2] \\
&& \le \liminf_{n \to \infty}  
[ \frac{1}{2}~ \| u_n - \grave{v} \|^2
+ \frac{\alpha}{2}~ \|e_n\|_{\hat{E}}^2 + \frac{\beta}{2} \|f_n\|_{\hat{F}}^2] \\ 
&& =  \liminf_{n \to \infty}  
[ J_{\varepsilon_n} (e_n,f_n) 
+ \frac{\alpha}{2}~ \|e_n\|_{\hat{E}}^2 + \frac{\beta}{2} \|f_n\|_{\hat{F}}^2] \\
&& \le  \liminf_{n \to \infty}  
[ J_{\varepsilon_n} (e_0,f_0) 
+ \frac{\alpha}{2}~ \|e_0\|_{\hat{E}}^2 + \frac{\beta}{2} \|f_0\|_{\hat{F}}^2] \\
&& =  \liminf_{n \to \infty}  
[ \frac{1}{2}~ \| \bar u_n - \grave{v} \|^2
+ \frac{\alpha}{2}~ \|e_0\|_{\hat{E}}^2 + \frac{\beta}{2} \|f_0\|_{\hat{F}}^2] \\
&& =  \frac{1}{2}~ \| u_0 - \grave{v} \|^2
+ \frac{\alpha}{2}~ \|e_0\|_{\hat{E}}^2 + \frac{\beta}{2} \|f_0\|_{\hat{F}}^2 \\
&& = J(e_0,f_0) + \frac{\alpha}{2}~ \|e_0\|_{\hat{E}}^2 + \frac{\beta}{2} \|f_0\|_{\hat{F}}^2 
 \,,
\end{eqnarray*}
what shows the optimality of $(\tilde e,\tilde f)$.

 The proof is complete.
 
\end{proof}

\begin{remark} 
Note that the optimization problems $(OP)$ and $(OP)_\varepsilon$ typically have multiple solutions. To overcome the ill-posedness, we could use the well-known method of Browder-Tykhonov regularization, see e.g. \cite{NasLiu98}. More specifically, if we have some a priori information of a solution of these problems, then this could be incorporated in the regularized problem $(OP)_\varepsilon$ in a similar way as it was done in the work 
\cite{GJKS18} that was focused to the identification of the ellipticity
parameter $e$.
\end{remark}  

Now we apply the general theory of optimality conditions in optimal control in the differentiable case presented in \cite{NST06} and derive 
from \cite[Theorem 3.1.7]{NST06}  the following optimality condition 
(of first order) for the regularized problem $(OP)_\varepsilon$.

\begin{theorem}\label{opti-reg}
Suppose the above compact imbeddings.
Suppose that the trilinear form $t$ satisfies
(\ref{tri-est-1}) and (\ref{tri-est-2})
and that the semisublinear form $s$ is given by (\ref{sub-mod}).
Moreover, let  the density function $\rho : \R \to \R _{+}$ 
in the definition  (\ref{sub-mod-reg}) of the regularizing function $S_\varepsilon$ satisfy (\ref{fin-abs-mean}).
Then for any $\varepsilon >0$, for any optimal solution 
$(\bar{e}_\varepsilon, \bar{f}_\varepsilon)$ of  $(OP)_\varepsilon$,
there exists  $\bar{p}_{\varepsilon}\in V,$ uniformly bounded in $V,$  such that
\begin{align}
T(\bar{e}_{\varepsilon} ~\bar{p}_{\varepsilon} +
\gamma^*(f M^{''}_{\varepsilon} ( \gamma \bar{u}_{\varepsilon}) 
\gamma \bar{p}_{\varepsilon}) & = v\grave- \bar{u}_{\varepsilon} \,,
\label{OPT1} \\
\alpha \langle e - \bar{e}_{\varepsilon},\bar{e}_{\varepsilon} \rangle_{\hat E \times \hat E}
 + t(e - \bar{e}_{\varepsilon},\bar{u}_{\varepsilon},\bar{p}_{\varepsilon})  
&\geq 0 ,\ \ \forall e \in E^{\mbox{\tiny ad}} \,,  \label{OPT2} \\
 \beta \langle f - \bar{f}_{\varepsilon},\bar{f}_{\varepsilon} 
\rangle_{\hat F \times \hat F}  +  (M^{'}_{\varepsilon} ( \gamma \bar{u}_{\varepsilon}) ~\gamma \bar{p}_{\varepsilon}),
f- \bar{f}_{\varepsilon}) 
&\geq 0 ,\ \ \forall f \in F^{\mbox{\tiny ad}} \,.  \label{OPT3} 
\end{align}%
\end{theorem}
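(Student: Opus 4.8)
The plan is to apply the general first-order optimality theory for control problems with a \emph{differentiable} control-to-state map, as provided by \cite[Theorem 3.1.7]{NST06}, and then to make the resulting adjoint equation and variational inequalities explicit by means of the sensitivity relations of Theorem \ref{sol-map-smooth}. Write the reduced cost of $(OP)_\varepsilon$ as $\Phi(e,f) := J_\varepsilon(e,f) + \frac{\alpha}{2}\|e\|_{\hat E}^2 + \frac{\beta}{2}\|f\|_{\hat F}^2$. Since ${\cal S}_\varepsilon$ is differentiable at interior points by Theorem \ref{sol-map-smooth} and $J_\varepsilon$ is the composition of ${\cal S}_\varepsilon$ with the smooth squared norm, $\Phi$ is differentiable, and the standard necessary condition on the convex feasible set $E^{\mbox{\tiny ad}}\times F^{\mbox{\tiny ad}}$ reads $\langle \Phi'(\bar e_\varepsilon,\bar f_\varepsilon),(e-\bar e_\varepsilon,\,f-\bar f_\varepsilon)\rangle \ge 0$ for all $(e,f)\in E^{\mbox{\tiny ad}}\times F^{\mbox{\tiny ad}}$.

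First I would introduce the adjoint state $\bar p_\varepsilon\in V$ as the solution of the adjoint equation (\ref{OPT1}), i.e. of the linear variational equation $t(\bar e_\varepsilon;\bar p_\varepsilon,w)+(\bar f_\varepsilon M''_\varepsilon(\gamma\bar u_\varepsilon)\gamma\bar p_\varepsilon,\gamma w)=(\grave{v}-\bar u_\varepsilon,w)$ for all $w\in V$. Well-posedness follows exactly as in the proof of Theorem \ref{sol-map-smooth}: the governing bilinear form is coercive because $t(\bar e_\varepsilon;\cdot,\cdot)$ is $V$-elliptic by (\ref{tri-est-2}), while the term $M''_\varepsilon(\gamma\bar u_\varepsilon)\ge 0$ from (\ref{m-C-2}) together with $\bar f_\varepsilon\ge 0$ contributes nonnegatively, so the Lax--Milgram lemma applies and $\bar p_\varepsilon$ is uniquely determined.

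The key step is the adjoint identity. Observe that the linear operator $A_\varepsilon := T(\bar e_\varepsilon)+\gamma^*(\bar f_\varepsilon M''_\varepsilon(\gamma\bar u_\varepsilon)\gamma\,\cdot\,)$ appearing both in (\ref{OPT1}) and in the sensitivity equations (\ref{DFVE-1}), (\ref{DFVE-2}) is \emph{self-adjoint}, since $t(\bar e_\varepsilon;\cdot,\cdot)$ is symmetric and $\gamma^* M''_\varepsilon(\gamma\bar u_\varepsilon)\gamma$ is of multiplication type, hence symmetric. By the chain rule, $D_e J_\varepsilon(\bar e_\varepsilon,\bar f_\varepsilon)(\delta e)=(\bar u_\varepsilon-\grave{v},\delta v_\varepsilon)_V$ with $\delta v_\varepsilon$ solving (\ref{DFVE-1}); testing (\ref{OPT1}) against $\delta v_\varepsilon$, invoking self-adjointness of $A_\varepsilon$ and then (\ref{DFVE-1}), gives $(\bar u_\varepsilon-\grave{v},\delta v_\varepsilon)_V=t(\delta e;\bar u_\varepsilon,\bar p_\varepsilon)$. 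Adding the derivative $\alpha\langle\bar e_\varepsilon,\delta e\rangle_{\hat E}$ of the stabilization term and setting $\delta e=e-\bar e_\varepsilon$ yields (\ref{OPT2}). The analogous computation with (\ref{DFVE-2}), whose right-hand side is $-\gamma^*(\delta f\,M'_\varepsilon(\gamma\bar u_\varepsilon))$, produces $(\bar u_\varepsilon-\grave{v},\delta w_\varepsilon)_V=(M'_\varepsilon(\gamma\bar u_\varepsilon)\gamma\bar p_\varepsilon,\delta f)$, which with $\beta\langle\bar f_\varepsilon,\delta f\rangle_{\hat F}$ and $\delta f=f-\bar f_\varepsilon$ gives (\ref{OPT3}).

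Finally I would verify the uniform bound on $\bar p_\varepsilon$: testing (\ref{OPT1}) against $\bar p_\varepsilon$ itself and discarding the nonnegative $M''_\varepsilon$-term gives $\underline{t}\,\|\bar p_\varepsilon\|_V^2\le(\grave{v}-\bar u_\varepsilon,\bar p_\varepsilon)_V$, whence $\|\bar p_\varepsilon\|_V\le\underline{t}^{-1}\|\grave{v}-\bar u_\varepsilon\|_V$. A uniform-in-$\varepsilon$ bound on $\bar u_\varepsilon$ then closes the argument: testing the state equation (\ref{VE-reg}) with $\bar u_\varepsilon$, using (\ref{tri-est-2}), the uniform bound on $\|\bar f_\varepsilon\|_{L^\infty(D)}$ on $F^{\mbox{\tiny ad}}$, the nonnegativity of $S_\varepsilon$, and the estimate $M_\varepsilon(0)\le 2k\varepsilon$ from Corollary \ref{p-reg-cor} and (\ref{m-reg-est}), yields $\|\bar u_\varepsilon\|_V$ bounded independently of small $\varepsilon$. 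I expect the main obstacle to be not the individual estimates but the correct bookkeeping of the adjoint identity, namely exploiting the symmetry of $t$ to make $A_\varepsilon$ self-adjoint so that the linearized states $\delta v_\varepsilon,\delta w_\varepsilon$ can be eliminated in favor of the single adjoint state $\bar p_\varepsilon$, together with securing the uniform boundedness of $\bar u_\varepsilon$ that the last assertion rests upon.
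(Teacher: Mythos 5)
Your proposal is correct and follows essentially the same route as the paper: the same adjoint equation, the same use of the symmetry of $t(e;\cdot,\cdot)$ and of the multiplication operator $\gamma^*(f M''_\varepsilon(\gamma\bar u_\varepsilon)\gamma\,\cdot)$ to pass to the adjoint, the same coercivity argument (ellipticity of $t$ plus $M''_\varepsilon\ge 0$) for well-posedness and for the uniform bound on $\bar p_\varepsilon$. The only cosmetic difference is that you work with the reduced functional and eliminate the sensitivities $\delta v_\varepsilon,\delta w_\varepsilon$ by self-adjointness, whereas the paper phrases this via the constraint map $A(u,e,f)$ and cites \cite[Theorem 3.1.7]{NST06}; you also supply the argument for the uniform boundedness of $\bar u_\varepsilon$, which the paper only asserts.
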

\begin{proof}
Let $\varepsilon >0$ be fixed.
Here we minimize
$$
L(u_{\varepsilon},e,f) = 1/2 ~\|u_{\varepsilon} - \grave v\|^2
+ 1/2 ~ \alpha \|e\|^2_{\hat E} + 1/2 ~ \beta \|f\|^2_{\hat F}
$$ 
and we have the constraint equation 
$$
A(u,e,f) = T(e) u + \gamma^*(f M'_\varepsilon (\gamma u)) - l = 0 \,.
$$
The partial derivative $ D_u A(u,e,f)$ is given by
$$
\partial u \mapsto T(e) \delta u +
 \gamma^*(f M^{''}_\varepsilon(\gamma u) \delta u) \,.
$$
 We compute the adjoints via
$$\langle T(e) \delta u,w \rangle_{V \times V} = 
\langle T(e) w, \delta u \rangle_{V \times V}
$$
by the symmetry of $t(e;\cdot,\cdot)$ and since $f \in L^\infty(D)$,
\begin{eqnarray*}
&& \langle \gamma^*(f M^{''}_\varepsilon ~(\gamma u) ~ \gamma ~ \delta u),w
 \rangle_{V \times V}
= (f M^{''}_\varepsilon(\gamma u) ~\gamma ~\delta u,
 \gamma w)_{L^2(D) \times L^2(D)} \\
&& = (\gamma  ~ \delta u, f M^{''}_\varepsilon(\gamma u) \gamma w)_{L^2(D) \times L^2(D)} 
= \langle  \delta u, 
\gamma^*(f M^{''}_\varepsilon(\gamma u) \gamma w)\rangle_{V \times V} \,.
\end{eqnarray*}
Hence with the partial derivative $D_u L$ we obtain 
for the optimal solution
$$ T(\bar{e}_{\varepsilon} ~\bar{p}_{\varepsilon})  +
\gamma^*(f M^{''}_{\varepsilon} ( \gamma \bar{u}_{\varepsilon}) 
\gamma \bar{p}_{\varepsilon}) + \bar{u}_{\varepsilon} - \grave v = 0_V \,,$$
what is \eqref{OPT1}. By coercivity - see the arguments in the proof of
Theorem \ref{sol-map-smooth} - this equation uniquely determines  
the adjoint variable $\bar{p}_{\varepsilon}$.

The partial derivative $ D_e A(u,e,f)$ is given by
$\partial e \mapsto T(\delta e) u $.
Then  with the partial derivative $D_e L$  and 
similar to the proof of
\cite[Theorem 3.6]{GJKS18} we obtain using the symmetry of $t(e,\cdot,\cdot)$
\eqref{OPT2}.

Finally the partial derivative $ D_f A(u,e,f)$ is given by
$$
\partial f \mapsto \gamma^*( \delta f M'_{\varepsilon} ( \gamma u)) \in V \,.
$$
Since  $P'_{\varepsilon}(t) \in [0,1]$ and hence  
$M'_{\varepsilon} ( \gamma v) \in L^\infty(D)$ for any $v \in V$,
we can compute the adjoint:
\begin{eqnarray*}
&& \langle \gamma^*(\delta f M'_\varepsilon(\gamma u)),w
 \rangle_{V \times V} \\
&& = (\delta f M'_\varepsilon(\gamma u), \gamma w)_{L^2(D) \times L^2(D)} \\
&& = (\delta f,  M'_\varepsilon(\gamma u) \gamma w)_{L^2(D) \times L^2(D)} \,.
\end{eqnarray*}
Hence with the partial derivative $D_f L$,  \eqref{OPT3} follows.

We still need to show that $\{\bar{p}_{\varepsilon}\}$ is uniformly bounded. For this, we take $v=\bar{p}_{\varepsilon}$ in the variational equation to the adjoint equation \eqref{OPT1} and by using ellipticity of $t$ and 
$ M^{''}_{\varepsilon} \ge 0$, we obtain
$$\underline{t} \left\Vert \bar{p}_{\varepsilon}\right\Vert _{V}^{2} \leq 
t(\bar{a}_{\varepsilon};\bar{p}_{\varepsilon},\bar{p}_{\varepsilon})+
\langle \gamma^*(f M^{''}_{\varepsilon} ( \gamma \bar{u}_{\varepsilon}) 
\gamma \bar{p}_{\varepsilon}),\bar{p}_{\varepsilon} \rangle
\leq C_{1}\left\Vert \bar{p}_{\varepsilon} \right\Vert_{V} \,,$$
where we also used the fact that $\{\bar{u}_{\varepsilon}\}$ is also bounded. The proof is complete.
\end{proof}

\section{ Concluding remarks -
 An outlook} \label{sec:8}
 
 The field of Inverse Problems and Parameter Identification is very vast. There are many books on this subject, see also the interesting surveys \cite{BonCon05,OniShi05}.

In this paper we have studied a parameter identification problem for 
nonlinear non-smooth problems in the setting of variational inequalities of the second kind. Thus our results are confined to the class of inverse problem where the associated direct problem is a convex variational problem.

On the other hand, there are interesting nonconvex variational problems
resulting from nonmonotone boundary conditions in contact mechanics
that describe adhesion and delamination phenomena; see e.g. 
\cite{GwOv15} for the forward problem. Here 
the identification of the nonmonotone contact laws is a challenging task.
Also in fluid mechanics nonmonotone boundary conditions are encountered, see e.g. \cite{DKM15}. Here also interesting parameter identification
problems arise. When the nonmonotone boundary conditions are of max or min or min-max type, advanced regularization techniques, see 
\cite{OvGw14,GwOv20} for the forward problem, are applicable. 

With linear partial differential equations with constant coefficients,
as with linear elasticity and Stokes flow, boundary integral methods are available. Then boundary element methods \cite{JGEPS} could be developed 
for the efficient and reliable solution of the friction parameter identification problem.
.

\bibliographystyle{amsplain}

\bibliography{Ident_VI_SecKind_BIB_II}

\providecommand{\bysame}{\leavevmode\hbox to3em{\hrulefill}\thinspace}
\providecommand{\MR}{\relax\ifhmode\unskip\space\fi MR }
\providecommand{\MRhref}[2]{%
  \href{http://www.ams.org/mathscinet-getitem?mr=#1}{#2}
}
\providecommand{\href}[2]{#2}
\begin{thebibliography}{10}

\bibitem{AdaFou}
R.A. Adams and J.J.F. Fournier, \emph{{S}obolev spaces}, second ed., Pure and
  Applied Mathematics (Amsterdam), vol. 140, Elsevier/Academic Press,
  Amsterdam, 2003.

\bibitem{AssRoe13}
U.~A{\ss}mann and A.~R\"osch, \emph{Identification of an unknown parameter
  function in the main part of an elliptic partial differential equation}, Z.
  Anal. Anwend. \textbf{32} (2013), no.~2, 163--178.

\bibitem{ABGS16}
M.~Ayadi, L.~Baffico, M.K. Gdoura, and T.~Sassi, \emph{Error estimates for
  {S}tokes problem with {T}resca friction conditions}, ESAIM Math. Model.
  Numer. Anal. \textbf{48} (2014), 1413--1429.

\bibitem{BaffSass15}
L.~Baffico and T.~Sassi, \emph{Existence result for a fluid structure
  interaction problem with friction type slip boundary condition}, ZAMM Z.
  Angew. Math. Mech. \textbf{95} (2015), 831--844.

\bibitem{BanKun}
H.T. Banks and K.~Kunisch, \emph{Estimation techniques for distributed
  parameter systems}, Birkh\"auser Boston, Inc., Boston, MA, 1989.

\bibitem{Bar81}
V.~Barbu, \emph{Necessary conditions for nonconvex distributed control problems
  governed by elliptic variational inequalities}, J. Math. Anal. Appl.
  \textbf{80} (1981), no.~2, 566--597.

\bibitem{Bar84}
\bysame, \emph{Optimal control of variational inequalities}, Research Notes in
  Mathematics, vol. 100, Pitman (Advanced Publishing Program), Boston, MA,
  1984.

\bibitem{BonCon05}
M.~Bonnet and A.~Constantinescu, \emph{Inverse problems in elasticity}, Inverse
  Problems \textbf{21} (2005), no.~2, R1--R50.

\bibitem{DKM15}
S.~Dudek, P.~Kalita, and S.~Mig\'{o}rski, \emph{Stationary flow of
  non-{N}ewtonian fluid with nonmonotone frictional boundary conditions}, Z.
  Angew. Math. Phys. \textbf{66} (2015), no.~5, 2625--2646.

\bibitem{FacPan}
F.~Facchinei and J.S. Pang, \emph{Finite-dimensional variational inequalities
  and complementarity problems}, vol.~II, Springer, New York, 2003.

\bibitem{Fujita94}
H.~Fujita, \emph{A mathematical analysis of motions of viscous incompressible
  fluid under leak or slip boundary conditions}, {RIMS Kokyuroku} \textbf{888}
  (1994), 199--216.

\bibitem{Glowinski1}
R.~Glowinski, \emph{Numerical methods for nonlinear variational problems},
  Springer-Verlag, Berlin, 2008, Reprint of the 1984 original.

\bibitem{GloWac11}
R.~Glowinski and A.~Wachs, \emph{{On the numerical simulation of viscoplastic
  fluid flow}}, {Handbook of numerical analysis. Vol XVI. Special Volume:
  Numerical methods for non-Newtonian fluids.}, Amsterdam: Elsevier/North
  Holland, 2011, pp.~483--717.

\bibitem{GocKha07}
M.S. Gockenbach and A.A. Khan, \emph{An abstract framework for elliptic inverse
  problems. {I}. {A}n output least-squares approach}, Math. Mech. Solids
  \textbf{12} (2007), no.~3, 259--276.

\bibitem{Gon06}
G.A. Gonz{\'a}lez, \emph{Theoretical framework of an identification problem for
  an elliptic variational inequality with bilateral restrictions}, J. Comput.
  Appl. Math. \textbf{197} (2006), 245--252.

\bibitem{Gwi18}
J.~Gwinner, \emph{An optimization approach to parameter identification in
  variational inequalities of second kind}, Optim. Lett. \textbf{12} (2018),
  1141--1154.

\bibitem{GJKS18}
J.~Gwinner, B.~Jadamba, A.~A. Khan, and M.~Sama, \emph{Identification in
  variational and quasi-variational inequalities}, J. Convex Anal. \textbf{25}
  (2018), 545--569.

\bibitem{GwOv15}
J.~Gwinner and N.~Ovcharova, \emph{From solvability and approximation of
  variational inequalities to solution of nondifferentiable optimization
  problems in contact mechanics}, Optimization \textbf{64} (2015), 1683--1702.

\bibitem{GwOv20}
\bysame, \emph{{A Garding inequality based unified approach to various classes
  of semi-coercive variational inequalities applied to non-monotone contact
  problems with a nested max-min superpotential}}, {Minimax Theory Appl.}
  \textbf{5} (2020), no.~1, 103--128.

\bibitem{JGEPS}
J.~Gwinner and E.P. Stephan, \emph{Advanced boundary element methods -
  {T}reatment of boundary value, transmission and contact problems}, Springer
  Series in Computational Mathematics, vol.~52, Springer, Cham, 2018.

\bibitem{Hin01}
M.~Hinterm{\"u}ller, \emph{Inverse coefficient problems for variational
  inequalities: optimality conditions and numerical realization}, M2AN Math.
  Model. Numer. Anal. \textbf{35} (2001), 129--152.

\bibitem{KhaMig19}
A.A. Khan, S.~Mig\'{o}rski, and M.~Sama, \emph{Inverse problems for
  multi-valued quasi variational inequalities and noncoercive variational
  inequalities with noisy data}, Optimization \textbf{68} (2019), no.~10,
  1897--1931.

\bibitem{KiSt}
D.~Kinderlehrer and G.~Stampacchia, \emph{An introduction to variational
  inequalities and their applications}, Society for Industrial and Applied
  Mathematics (SIAM), Philadelphia, PA, 2000, Reprint of the 1980 original.

\bibitem{MigKha19}
S.~Mig\'{o}rski, A.A. Khan, and S.~Zeng, \emph{Inverse problems for nonlinear
  quasi-variational inequalities with an application to implicit obstacle
  problems of {$p$}-{L}aplacian type}, Inverse Problems \textbf{35} (2019),
  no.~3, 035004, 14.

\bibitem{NasLiu98}
M.~Z. Nashed and F.~Liu, \emph{On nonlinear ill-posed problems. {II}.
  {M}onotone operator equations and monotone variational inequalities}, Theory
  and applications of nonlinear operators of accretive and monotone type,
  Lecture Notes in Pure and Appl. Math., vol. 178, Dekker, New York, 1996,
  pp.~223--240.

\bibitem{NST06}
P.~Neittaanmaki, J.~Sprekels, and D.~Tiba, \emph{Optimization of elliptic
  systems}, Springer Monographs in Mathematics, Springer, New York, 2006,
  Theory and applications.

\bibitem{OvGw14}
N.~Ovcharova and J.~Gwinner, \emph{A study of regularization techniques of
  nondifferentiable optimization in view of application to hemivariational
  inequalities}, J. Optim. Theory Appl. \textbf{162} (2014), 754--778.

\bibitem{OniShi05}
K.~Shirota and K.~Onishi, \emph{{Adjoint method for numerical solution of
  inverse boundary value and coefficient identification problems}}, {Surv.
  Math. Ind.} \textbf{11} (2005), no.~1-4, 43--93.

\bibitem{Tiba90}
D.~Tiba, \emph{Optimal control of nonsmooth distributed parameter systems},
  Lecture Notes in Mathematics, vol. 1459, Springer-Verlag, Berlin, 1990.

\end{thebibliography}

\end{document}